\theoremstyle{plain}
\newtheorem{theorem}{Theorem}[section]
\newtheorem{lemma}[theorem]{Lemma}
\newtheorem{corollary}[theorem]{Corollary}
\newtheorem{proposition}[theorem]{Proposition}
\newtheorem{prop-defn}[theorem]{Proposition-Definition}
\newtheorem*{theorem:main}{Main Theorem} 
\theoremstyle{definition}
\newtheorem{remark}[theorem]{Remark}
\newtheorem*{remark*}{Remark}
\newtheorem*{remarks*}{Remarks}
\newcommand{\diam}{\rm diam}
\newcommand{\Teich}{\mathcal T}
\newcommand{\C}{\mathcal C}
\newcommand{\SL}{\rm SL}
\title{Pseudo-Anosov homeomorphisms not arising from branched covers.}
\author{Christopher J. Leininger}
\author{Alan W. Reid}
\address{\newline Department of Mathematics,
\newline University of Illinois Urbana-Champaign, 
\newline Urbana, IL 61801, USA}
\email{clein@math.uiuc.edu}
\address{\newline Department of Mathematics,
\newline Rice University, 
\newline Houston, TX 77005, USA}
\email{ alan.reid@rice.edu}
\thanks{The authors gratefully acknowledge support from NSF grants DMS-1510034 (CL) and DMS-1755177(AR)}
\begin{document}

\maketitle

\begin{abstract} In this paper we provide a negative answer to a question of Farb about the relation between the algebraic degree of the stretch factor of a pseudo-Anosov homeomorphism and the genus of the surface on which it is defined.
\end{abstract}

\section{Introduction}

Let $S = S_g$ be a closed, connected surface of genus $g$.  A pseudo-Anosov homeomorphism $f \colon S \to S$ is a {\em virtual lift} if there exists a branched cover $p \colon S \to \Sigma$ with degree $\deg(p) >1$ and a pseudo-Anosov $\phi \colon \Sigma \to \Sigma$ so that $\phi$ lifts to a power of $f$ by $p$; that is, there exists $n > 0$ so that $pf^n = \phi p$.  We say that {\em $f^n$ is a lift of $\phi$ via $p$}.

Franks-Rykken \cite{FranksRykken} showed that if $f \colon S \to S$ is a pseudo-Anosov (with orientable stable/unstable foliations), $g \geq 2$, and if the stretch factor $\lambda(f)$ is a quadratic irrational, then $f$ is a virtual lift---in fact, the branched cover is over a torus $p \colon S \to \Sigma$ (cf.~Gutkin-Judge \cite{GutkinJudge} and Kenyon-Smillie \cite{KenyonSmillie}).  In $2004$, Farb asked (see \cite{Strenner})
if a version of this is true when the degree of the stretch factor was greater than $2$.  Specifically, he asked if there exists a function $h \colon \mathbb N \to \mathbb N$ so that if $f \colon S_g \to S_g$ is a pseudo-Anosov homeomorphism, the degree of $\lambda(f)$ over $\mathbb Q$ is at most $d$, and $g \geq h(d)$, is $f$ a virtual lift?  Here we prove that the answer is `no'.

\begin{theorem:main} For any even $d \geq 4$ and all $g \geq \tfrac{d}2+2$, there exists pseudo-Anosov homeomorphisms $f_{g,d} \colon S_g \to S_g$ with orientable stable/unstable foliations and $\lambda(f_{g,d})$ of degree $d$ over $\mathbb Q$, so that $f_{g,d}$ is not a virtual lift.
\end{theorem:main}

%
We note that simultaneously and independently, M. Yazdi \cite{Yaz} has also answered Farb's question in the negative. In \cite{Yaz} he shows that for all $g\geq 3$, there are 
pseudo-Anosov maps $f_g:S_g\rightarrow S_g$ so that $\lambda(f_g)$ has degree $6$ and at most finitely many of them can be virtual lifts.  The method of proof is different from that given here.

We also mention the related results  \cite[Lemma 6.2]{BF} and \cite[Corollary 1.4]{Strenner} that both describe conditions which guarantee that a pseudo-Anosov is not a virtual lift. In the former case no control on the stretch factor is given, and in the
latter the stretch factors have degree $6g-6$ (the maximal possible degree).
%
%

We complete the Introduction by briefly describing the idea of the proof of the Main Theorem.  The pseudo-Anosov homeomorphisms are constructed as products of high powers of Dehn twists.  The twisting curves and powers are chosen in such a way that we can apply Strenner's results from \cite{Strenner} to ensure that the stretch factor has the appropriate degree.  To prove that the homeomorphisms are not virtual lifts, we analyze the flat metrics defining the associated Teichm\"uller axes.  Appealing to work of Rafi \cite{Rafi1}, Minsky \cite{MinskyShort2}, and Brock-Canary-Minsky \cite{BCM-ELC2}, we prove that for carefully chosen twisting curves, there is a biinfinite collection of simple closed curves that are ``characteristic'' for the pseudo-Anosov.  These characteristic curves are described in terms of Euclidean cylinder neighborhoods with respect to the flat metrics, and if a pseudo-Anosov homeomorphism is a virtual lift, we prove that they must project to the quotient surface in a very specific way.  The proof is completed by choosing the twisting curves so that the associated biinfinite sequence of curves cannot project to any nontrivial quotient surface in that way.\\

\noindent{\bf Acknowledgement:} {\em The authors wish to thank the organizers of the Oberwolfach Workshop "Surface Bundles" in December $2016$ for their invitation to attend the workshop and where this work
started.  We also wish to thank the organizers of the 3d GEAR Network Retreat, Stanford August 2017 where this work was largely completed. The authors would also like to thank Bal\'azs  Strenner for comments on an earlier version of the paper.}

\section{Surfaces, curves, and annular projections}

Suppose $S$ is any orientable hyperbolic surface of finite topological type.
Here we collect a few facts about curve complexes and subsurface projections.  See \cite{MM1} and \cite{MM2} for more details.

The {\em curve graph of $S$}, $\C(S)$, is the simplicial complex whose vertex set $\C^{(0)}(S)$ is the set of isotopy classes of essential simple closed curves on $S$.  A pair of isotopy classes determine an edge if and only if they can be realized disjointly on $S$---equivalently, the geodesic representatives with respect to the hyperbolic metric are disjoint.   We make $\C(S)$ into a geodesic metric space by declaring each edge to have length $1$.  According to \cite{MM1}, $\C(S)$ is $\delta$--hyperbolic.

If $Y$ is an annulus, we define the {\em curve graph of $Y$}, $\C(Y)$, in a similar fashion: the vertex set consists of isotopy classes of essential arcs in $Y$, where isotopies are required to fix the boundary pointwise.  Edges connect isotopy classes when there are representatives with disjoint interiors, and we similarly make $\C(Y)$ into a geodesic metric space.  

The curve graphs of annuli arise from annular subsurfaces of $S$ as follows.  Given an essential annulus $Y \subset S$, there is a corresponding covering space $\widetilde Y \to S$.  The ideal boundary of the universal covering $\mathbb H^2 \to S$ determines an ideal boundary of $\widetilde Y$, and we let $\overline Y$ denote $\widetilde Y$ together with its ideal boundary, making $\overline Y$ into a compact surface with boundary.  Given a vertex $\alpha$ of $\C(S)$, representing $\alpha$ by its hyperbolic geodesic representative, we let $\widetilde \alpha$ denote the union of the arcs in the preimage of $\alpha$ in $\overline Y$.  We define $\pi_Y(\alpha)$ to be the union of the components of $\widetilde \alpha$ which are essential in $\overline Y$ (together with their ideal endpoints); that is, the components with an endpoint on each boundary component of $\bar Y$.  We view $\pi_Y(\alpha)$ as subset of $\C(Y)$.  Note that $\pi_Y(\alpha) \subset \C(Y)$ has diameter $1$ (any
  two components are disjoint).  Given two curves $\alpha,\beta \in \C(S)$, if $\pi_Y(\alpha)$ and $\pi_Y(\beta)$ are both nonempty, we define
\[ d_Y(\alpha,\beta) = \diam(\pi_Y(\alpha) \cup \pi_Y(\beta)).\]
%
With this definition, given any three curves with non-empty projections, $d_Y$ satisfies a triangle inequality.  See \cite{MM2} for more on subsurface projections, and definitions for other types of subsurfaces.

The core curve of $Y$ is an essential simple closed curve $\gamma$ in $S$ and every essential simple closed curve is the core curve of an essential annulus.  We sometimes write $\C(\gamma)$, $\pi_\gamma$, and $d_\gamma$ instead of $\C(Y)$, $\pi_Y$, and $d_Y$, respectively.  We have $\pi_\gamma(\alpha) \neq \emptyset$ if and only if the geometric intersection number, $i(\alpha,\gamma)\neq 0$.

One of the key features of subsurface projections is the following {\em Bounded Geodesic Image Theorem} in the case of annuli.

\begin{proposition} \label{P:BGI} There exists a constant $M > 0$ with the following property.  If $\alpha,\beta$ are two curves in $\C(S)$ and $d_\gamma(\alpha,\beta) > M$, then the geodesic from $\alpha$ to $\beta$ contains a vertex $\delta$ so that $i(\delta,\gamma) = 0$, and hence $\delta$ is adjacent to $\gamma$ in $\C(S)$.
\end{proposition}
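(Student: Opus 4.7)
The plan is to prove Proposition 2.1 by contradiction. Suppose the geodesic $\alpha = v_0, v_1, \ldots, v_n = \beta$ in $\C(S)$ has every vertex $v_i$ intersecting $\gamma$, so that $\pi_\gamma(v_i)$ is nonempty for all $i$. The goal is to bound $d_\gamma(\alpha,\beta)$ by a universal constant $M$, and then take the contrapositive.

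The first step is a Lipschitz-type estimate: if two curves $u, u'$ are disjoint on $S$ and both intersect $\gamma$ essentially, then $d_\gamma(u,u') \leq 2$. This follows directly from the definition via the annular cover $\overline Y \to S$ associated to $\gamma$: disjoint geodesic representatives on $S$ lift to arcs with pairwise disjoint interiors in $\overline Y$, and in particular the essential arc components of $\widetilde u$ and $\widetilde{u'}$ in $\overline Y$ are disjoint. Hence they represent vertices at distance at most $1$ in $\C(Y)$, and since each of $\pi_\gamma(u)$ and $\pi_\gamma(u')$ has diameter at most $1$ in $\C(Y)$, the triangle inequality yields $d_\gamma(u,u') \leq 2$.

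Applied to consecutive vertices of the geodesic, this estimate yields only $d_\gamma(\alpha,\beta) \leq 2 \, d_{\C(S)}(\alpha,\beta)$, which depends on the length of the geodesic and is insufficient for a universal bound. To obtain the constant $M$, the main step uses the hyperbolicity of $\C(S)$ together with a contraction property of the projection $\pi_\gamma$. Consider the set $A = \{v \in \C^{(0)}(S) : i(v,\gamma) = 0\}$ of vertices disjoint from $\gamma$; each such vertex is adjacent to $\gamma$ in $\C(S)$, and $A$ is known to be quasi-convex. If the geodesic $[\alpha,\beta]$ stays uniformly far from $A$, then by hyperbolicity its projection to $\C(Y)$ behaves like a quasi-geodesic, and one argues that its total length is controlled independently of $n$. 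Thus, if $d_\gamma(\alpha,\beta)$ is sufficiently large, the geodesic must come close to $A$, producing a vertex $\delta$ with $i(\delta,\gamma) = 0$.

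The principal obstacle is this contraction step---showing that $\pi_\gamma$ cannot drift arbitrarily far along a geodesic whose vertices all intersect $\gamma$. This is exactly the content of the original Bounded Geodesic Image Theorem of Masur-Minsky, proved via the hierarchy machinery in \cite{MM2}. Alternatively, one can follow the more direct modern approaches of Webb or of Hensel-Przytycki-Webb, which give uniform constants using tight geodesics and unicorn paths and avoid hierarchies altogether. In practice, since this is a well-established result, I would simply invoke the theorem from \cite{MM2} rather than reprove it.
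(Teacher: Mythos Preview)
Your proposal ultimately lands in the same place as the paper: the paper does not give a proof of Proposition~\ref{P:BGI} at all, but simply states it as the annular case of the well-known Bounded Geodesic Image Theorem (implicitly pointing to \cite{MM2}), and you end by doing the same. So at the level of what is actually being claimed, you agree with the paper.

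That said, a couple of remarks on the intermediate discussion. Your Lipschitz estimate is correct (in fact one gets $d_\gamma(u,u') \leq 1$ rather than $2$, since disjoint curves lift to arcs with pairwise disjoint interiors, hence every pair in $\pi_\gamma(u)\cup\pi_\gamma(u')$ is at distance at most $1$). But as you recognize, this alone gives nothing uniform. The sketch you then give---quasi-convexity of the disjointness set and a contraction argument via hyperbolicity---is a reasonable heuristic, but it is not a proof: the assertion that ``its projection to $\C(Y)$ behaves like a quasi-geodesic'' with controlled total length is precisely the nontrivial content of the theorem, and you have not supplied an argument for it. If you want to actually prove the result rather than cite it, you should follow one of the references you mention (e.g.\ the tight-geodesic argument of Webb), since the step you leave open is the whole difficulty.

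In short: your conclusion matches the paper's treatment (cite \cite{MM2}), but the preceding paragraphs should be presented as motivation rather than as a proof outline, since the ``contraction step'' you flag is exactly what remains to be done.
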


The following is a special case of the Behrstock Inequality \cite{Behr} for annuli that we will need.

\begin{proposition}  \label{P:Behrstock} Suppose $\alpha,\beta,\gamma$ are three simple closed curves on $S$ that pairwise intersect.  If $d_\gamma(\alpha,\beta) \geq 10$ then $d_\alpha(\gamma,\beta) \leq 3$.
\end{proposition}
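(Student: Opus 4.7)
The plan is to derive Proposition~\ref{P:Behrstock} as essentially a direct specialization of Behrstock's inequality \cite{Behr}. In its general form, Behrstock's inequality asserts that there is a universal constant $M$ such that whenever $Y$ and $Z$ are overlapping subsurfaces of $S$ and $\delta$ is any curve projecting nontrivially to both, one has
\[ \min\{d_Y(\delta,\partial Z),\, d_Z(\delta,\partial Y)\} \leq M. \]
First, I would apply this with $Y$ the annular neighborhood of $\gamma$, $Z$ the annular neighborhood of $\alpha$, and $\delta = \beta$. Since the pairwise-intersecting hypothesis guarantees that the two annuli overlap and that $\beta$ projects to both, and since $\partial Y$ and $\partial Z$ are identified with $\gamma$ and $\alpha$ respectively, this becomes
\[ \min\{d_\gamma(\beta,\alpha),\, d_\alpha(\beta,\gamma)\} \leq M. \]

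Next, I would use the hypothesis $d_\gamma(\alpha,\beta) \geq 10$: as long as the constant $M$ can be taken to be at most $9$ (which is the case in the formulations appearing in the literature), this forces the minimum above to be realized by the second term, yielding $d_\alpha(\gamma,\beta) \leq M$.

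The main obstacle is not the logical structure above but the bookkeeping needed to pin down the specific numerical constants $10$ and $3$. To obtain the optimal constant $3$ on the right-hand side one cannot simply quote the general inequality as a black box; instead I would prove Behrstock's inequality in the annular case directly. The idea is to realize all three curves geodesically and pass to the annular cover $\overline{Y_\gamma}$: the projection $\pi_\gamma(\alpha)$ and $\pi_\gamma(\beta)$ are essential arcs, and a large value of $d_\gamma(\alpha,\beta)$ forces $\beta$ to wrap many times around $\gamma$ relative to $\alpha$. Lifting to $\overline{Y_\alpha}$ and quantifying how often lifts of $\beta$ must run parallel to lifts of $\gamma$ then gives the desired bound. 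The careful counting in this last step, rather than any deep idea, is where all the difficulty lies; since the proposition is an explicit special case of a well-known and repeatedly-reproved inequality, citing a version with explicit constants (e.g. from the proofs in \cite{Behr} or streamlined versions due to Mangahas) would be the most economical route in the write-up.
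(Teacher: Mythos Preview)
Your proposal is correct and matches the paper exactly: the paper does not give its own proof of this proposition but simply cites Mangahas \cite{joh1,MangahasRecipe} for the version with explicit constants, which is precisely the ``most economical route'' you recommend. Your sketch of a direct argument via lifts to the universal cover and counting crossings is also sound and is essentially the standard hands-on proof (a variant of which the authors in fact drafted but chose not to include).
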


This version with explicit constants is proved by Mangahas in \cite{joh1,MangahasRecipe}.

\section{Teichmuller geodesics and Euclidean cone metrics}

A pseudo-Anosov homeomorphism $f \colon S \to S$ preserves a Teichm\"uller geodesic axis defined by a unit area non-positively curved Euclidean cone metric $q_0$ for which the stable and unstable foliations $\mu^\pm$ are orthogonal, geodesic foliations.  Furthermore, in preferred coordinates $\mu^\pm$ are horizontal and vertical, respectively, and the transverse measures are given by horizontal and vertical variation, respectively.  The different points along the axis are conformal structures of Euclidean cone metrics $q_t$ in which the stable and unstable foliations have their transverse measures scaled as $e^t \mu^+, e^{-t} \mu^-$ (maintaining unit area for the Euclidean cone metrics). We call the family of Euclidean cone metrics $Q =  \{q_t\}_{t \in \mathbb R}$ the {\em associated flat metics}.   Note that any two metrics in the family differ by an affine diffeomorphism (away from the cone points).  We write $\ell_{q_t}(\gamma)$ for the $q_t$--length of a curve $\gamma$.

If $f^n$ is a lift of $\phi \colon \Sigma \to \Sigma$ via a branched cover $p \colon S \to \Sigma$, then the associated flat metrics $\Xi = \{\xi_t\}$ for $\phi$ can be chosen so that $q_t = \sqrt{\deg(p)} \, p^*(\xi_t)$  (this scaling is necessary since $q_t$ and $\xi_t$ have unit area).  In this case, we say that $Q =\{q_t\}$ and $\Xi = \{\xi_t\}$ are {\em compatible}.

If $Q = \{q_t\}$ are the flat metrics associated to a pseudo-Anosov on $S$ as above, a {\em $Q$--cylinder} or {\em flat cylinder for $Q$} (or just flat cylinder, if $Q$ is understood) is an annulus $Y \subset S$ so that the path metric on $Y$ coming from some $q_t  \in Q$ makes $Y$ into a Euclidean product $I \times S^1$, where $I$ is an interval (we allow the possibility that $Y$ is only embedded on its interior, but still write $Y \subset S$).  Note that if the metric on $Y$ is a Euclidean product for some $q_t \in Q$, then it is for all $q_t \in Q$ (and any two such metrics differ by affine diffeomorphism).   The $q_t$--modulus of a flat cylinder $Y \subset S$, denoted $M(Y,q_t)$, is the ratio of the height to circumference, and $M(Y,Q) = \max\{M(Y,q_t) \mid t \in \mathbb R\}$ is the maximum modulus.  If $\gamma \subset S$ is a two-sided simple closed curve, there is a (possibly degenerate) maximal flat cylinder $Y_\gamma \subset S$ whose core curves are isotopic to $\gamma$, and we set $M(\gamma,q_t) = M(Y_\gamma,q_t)$ and $M(\gamma,Q) = M(Y_\gamma,Q)$.

We say that $\gamma$ is a {\em $Q$--cylinder curve} if $M(\gamma,Q) > 0$.  In this case, there is a unique $t_\gamma \in \mathbb R$, called the {\em balance time} of $\gamma$, so that the $q_t$--length $\ell_{q_t}(\gamma)$ is minimized at $t_\gamma$ and $M(\gamma,q_{t_\gamma}) = M(\gamma,Q)$.  Indeed, $t_\gamma$ is the unique time for which the core geodesics of the Euclidean cylinder make angle $\frac{\pi}4$ with the vertical and horizontal foliations, and
\[ \ell_{q_t}(\gamma) = \ell_{q_{t_\gamma}}(\gamma) \cosh^{\frac12}(2(t-t_\gamma)).\]
The following is an easy case of a result of Rafi \cite{Rafi1}.

\begin{proposition} \label{P:flat annuli iff twisting} Suppose $f \colon S \to S$ is a pseudo-Anosov homeomorphism, $Q$ is the associated family of flat metrics, and $\mu^\pm$ are the stable and unstable foliations.  If $d_\gamma(\mu^+,\mu^-) > 4$, 
then $\gamma$ is a $Q$--cylinder curve.  In general, if $\gamma$ is a $Q$--cylinder curve, then
\[ \left| M(\gamma,Q) - \frac{d_\gamma(\mu^+,\mu^-)}2 \right| \leq 3 \]
\end{proposition}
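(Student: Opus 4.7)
The plan is to compute $d_\gamma(\mu^+, \mu^-)$ directly by unrolling the maximal flat cylinder $Y_\gamma$ in the balance-time metric $q_{t_\gamma}$ and tracking how lifts of leaves of $\mu^\pm$ twist around the core of $\gamma$ in the annular cover $\overline Y$. The key symmetry, available at the balance time, is that the core of $Y_\gamma$ makes angle $\pi/4$ with both $\mu^+$ and $\mu^-$, which makes the twist calculation explicit.

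Suppose first that $\gamma$ is a $Q$-cylinder curve, with $c = \ell_{q_{t_\gamma}}(\gamma)$ and cylinder height $h$, so $M(\gamma, Q) = h/c$. I would fix coordinates in $q_{t_\gamma}$ so that $\mu^+$ is horizontal, $\mu^-$ is vertical, and the core of $Y_\gamma$ has direction $(1,1)/\sqrt 2$. Unrolling the universal cover of $Y_\gamma$ as the strip $\mathbb R \times [0, h]$ with $u$ along the core and $v$ perpendicular to it, a horizontal leaf satisfies $u + v = \text{const}$ while a vertical leaf satisfies $u - v = \text{const}$. Hence as $v$ runs from $0$ to $h$, horizontal and vertical leaves shift by $-h$ and $+h$ in the $u$-direction, respectively. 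Passing to $\overline Y$ (in which $u$ is periodic of period $c$), the arcs of $\pi_\gamma(\mu^+)$ and $\pi_\gamma(\mu^-)$ differ by a relative twist of $2h/c = 2M(\gamma,Q)$.

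I would next convert twist into the metric $d_\gamma$ on $\C(\overline Y)$. Fixing an essential arc in $\overline Y$ transverse to the core, the twist of any essential arc is its algebraic intersection with this transversal, and a standard comparison shows that $d_\gamma(\alpha,\beta)$ differs from the difference of twist counts by a uniform additive constant. Combined with the previous paragraph this yields $|d_\gamma(\mu^+, \mu^-) - 2M(\gamma, Q)| \leq 6$, which is equivalent to the second inequality of the proposition. For the first claim, one uses that every essential simple closed curve admits a (possibly degenerate) maximal flat cylinder; when $M(\gamma, Q) = 0$, the same strip analysis with $h = 0$ produces no twist contribution from inside the cylinder, so $d_\gamma(\mu^+, \mu^-)$ is accounted for entirely by the bounded additive error. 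Bounding that error by $4$ then shows that $d_\gamma(\mu^+, \mu^-) > 4$ forces $M(\gamma, Q) > 0$.

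The hard part will be sharpening the additive constants to match the explicit bounds $4$ and $3$ in the statement. This requires separating the genuine twisting that occurs strictly inside $Y_\gamma$ from any residual twisting that happens as leaves cross singularities lying on $\partial Y_\gamma$, as well as accounting for the fact that $\pi_\gamma(\mu^\pm)$ is a diameter-$1$ set of arcs rather than a single arc. The geometric content that twist equals $h/c$ is essentially immediate from the strip picture; the rest is bookkeeping of signs and boundary behavior around the cylinder.
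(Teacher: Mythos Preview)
Your approach is essentially the same as the paper's: both compute the relative twist of $\mu^+$ and $\mu^-$ inside the flat cylinder $Y_\gamma$ at balance time (where the $\pi/4$ angle makes the count come out to $2M(\gamma,Q)$), and then bound the additive error coming from outside the cylinder. The paper phrases the inside count as counting intersection points of lifted leaves $\delta^\pm$ in the annular cover, noting that $\delta^+$ and $\delta^-$ cut $Y_\gamma$ into squares whose diagonals are core curves; your strip/coordinate picture is an equivalent way to see the same thing.

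The gap is in the outside bound. You describe the residual error as ``bookkeeping of signs and boundary behavior around the cylinder,'' but this is in fact the substantive geometric ingredient. In the annular cover $\overline Y$, the lifts $\delta^\pm$ do not terminate at $\partial Y_\gamma$: they continue into the complementary regions, and one must show they intersect at most a bounded number of additional times there. The paper handles this by invoking \cite[Lemma~3.8]{Rafi1}, which asserts that all but at most $2$ of the intersection points of $\delta^+$ and $\delta^-$ lie inside (the lift of) $Y_\gamma$. This is exactly what yields the first implication (if $d_\gamma(\mu^+,\mu^-) > 4$ then the cylinder is nondegenerate) and what makes the additive constant finite and explicit. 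Your sketch as written produces no bound at all on the twisting outside $Y_\gamma$, so it does not yet establish even $|d_\gamma(\mu^+,\mu^-) - 2M(\gamma,Q)| < \infty$, let alone the constant~$6$ you claim. Once you invoke Rafi's lemma (or reprove it via the expanding-annulus structure of the complement of $Y_\gamma$ in the flat metric), the remainder of your argument goes through with the stated constants.
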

\begin{proof}[Sketch of proof.] Choose lifts of leaves $\delta^+$ of $\mu^+$ and $\delta^-$ of $\mu^-$ to the annular cover $\widetilde Y_\gamma$ of $S$ so that
\[ |d_\gamma(\mu^+,\mu^-) - i(\delta^+,\delta^-) |  \leq 2. \]
By Lemma 3.8 of \cite{Rafi1}, all but at most $2$ of the intersection points between $\delta^+$ and $\delta^-$ must occur in (the lift of) the maximal flat Euclidean cylinder $Y_\gamma$ (which is thus nonempty if $d_\gamma(\mu^+,\mu^-) > 4$).

All that remains is to prove that $M(\gamma,Q)$ is within $2$ of $\frac{I}2$, where $I = i(\delta^+,\delta^-)$.  For this, note that $\delta^+$ and $\delta^-$ cut $Y_\gamma$ into squares (or parts of squares near $\partial Y_\gamma$) whose sides make angle $\frac{\pi}4$ with the core curves.  In each square there is a diagonal that closes up to a core curve, and hence has length $L = \ell_{q_{t_\gamma}}(\gamma)$.  On the other hand, there are geodesics running orthogonally from one boundary component to the other that cut roughly half the squares in diagonals opposite those that define core curves.  Considering the two extreme cases (when there are two intersection points outside $Y_\gamma$ and when all intersection points are inside $Y_\gamma$ and as far as possible from $\partial Y_\gamma$), we see that the distance $D$ between boundary components satisfies
\[ \frac{L(I-3)}2 \leq D \leq \frac{L(I+1)}2. \]
Since $M(\gamma,Q) = \frac{D}{L}$, this implies $\tfrac{I-3}2 \leq M(\gamma,Q) \leq \frac{I+1}2$, completing the proof.
\end{proof}

The proof of our main theorem will rely on understanding how $Q$--cylinders in $S$ are mapped down to $\Sigma$.  The images need not be cylinders, but with some additional mild assumptions, they are very well behaved.  A {\em Euclidean half-pillowcase} is the quotient of a Euclidean cylinder $S^1 \times [-T,T]$ by the group generated by the involution $\tau(e^{i\theta},t) = (e^{-i \theta},-t)$.  Considering a fundamental domain for this action, we can equivalently describe this as the Euclidean orbifold obtained by gluing a component of the boundary of a Euclidean cylinder $S^1 \times [0,T]$ to itself by the map $(e^{i \theta},0) \sim (e^{-i \theta},0)$.  Topologically, a half-pillow case is a disk with two marked points.  The two marked points are cone points with cone angle $\pi$ and there is a geodesic segment, the {\em core segment}, connecting those points whose complement is itself a half-open Euclidean cylinder.  We will refer to the modulus of the complementary Euclid
 ean cylinder as the {\em modulus of the half-pillowcase}.

\begin{lemma} \label{L:cylinders into orientable} Suppose $\Sigma$ is an orientable surface and $\phi \colon \Sigma \to \Sigma$ a pseudo-Anosov homeomorphism with associated flat metrics $\Xi = \{\xi_t\}$.  Assume that the only marked points of $\Sigma$ are cone points of $\xi_t$ with cone angle $\pi$ and that $\Sigma$ is not a torus or a sphere with four marked points.  Let $h \colon Y \to \Sigma$ denote a map of an open Euclidean cylinder into $\Sigma$ which for some $\xi_t \in \Xi$, is a local isometry away from a finite number of branched points.
Then either $h(Y)$ is a Euclidean cylinder in $\Sigma$ and $h$ is a covering map onto its 
image or else $h(Y)$ is a Euclidean half-pillowcase.  In either case, $M(h(Y),\xi_t) \geq \frac{M(Y)}2$.
\end{lemma}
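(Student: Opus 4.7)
The strategy is to lift $h$ to the orientation double cover of the flat structure on $\Sigma$ and analyze the lift as a map into a translation surface, where parallel families of closed geodesics are constrained to lie inside maximal flat cylinders; then descend to obtain the cylinder/half-pillowcase dichotomy.

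First, since all cone angles of $\xi_t$ on $\Sigma$ are integer multiples of $\pi$, the linear holonomy of the flat structure takes values in $\{\pm I\}$, and I let $\pi \colon \hat\Sigma \to \Sigma$ be the orientation double cover associated to the kernel of this holonomy. At each marked point (cone angle $\pi$), the loop around the point has holonomy $-I$, so the cover ramifies with a single preimage whose cone angle lifts to $2\pi$: a regular point of $\hat\Sigma$. Thus $\hat\Sigma$ is a translation surface with deck involution $\iota$ acting by $-I$ on the translation structure. For a generic core circle $\gamma_s \subset Y$ disjoint from the branch set, the image $h(\gamma_s)$ is a smooth closed $\xi_t$-geodesic whose tangent vector is preserved by parallel transport; since only $+I \in \{\pm I\}$ fixes a nonzero vector, the holonomy is trivial, and $h$ lifts to $\tilde h \colon Y \to \hat\Sigma$. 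At each branch point $y \in Y$, the local $2$-to-$1$ branching of $h$ onto a cone point of $\Sigma$ composes with the local ramification of $\pi$ to yield an honest local isometry for $\tilde h$ at $y$, with no branching. The images $\tilde h(\gamma_s)$ form a connected family of parallel closed geodesics in $\hat\Sigma$, forcing $\tilde h(Y)$ to lie in a single maximal flat cylinder $\tilde C \subset \hat\Sigma$, and $\tilde h \colon Y \to \tilde h(Y) \subset \tilde C$ is a degree-$k$ covering for some $k \geq 1$. The hypothesis that $\Sigma$ is neither a torus nor a sphere with four marked points is used here to guarantee that $\tilde C$ is a proper subsurface of $\hat\Sigma$, ruling out degenerate cases where $\hat\Sigma$ itself would be a flat torus.

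Next I descend via $\pi$. If $\iota(\tilde C) \neq \tilde C$, or more generally $\tilde h(Y)$ avoids the fixed points of $\iota|_{\tilde C}$, then $\pi$ restricts to an isometry on $\tilde h(Y)$, so $h(Y)$ is a Euclidean cylinder in $\Sigma$, $h$ is a degree-$k$ covering onto its image, and $M(h(Y),\xi_t) = k M(Y) \geq M(Y)$. Otherwise $\iota(\tilde C) = \tilde C$ and $\tilde h(Y)$ contains the two fixed points of $\iota|_{\tilde C}$. In appropriate coordinates $\tilde C \cong S^1 \times (-T, T)$, the involution becomes $(e^{i\theta}, t) \mapsto (e^{-i\theta}, -t)$; its fixed points $(\pm 1, 0)$ are lifts of marked points of $\Sigma$, and $\pi(\tilde C)$ is a half-pillowcase $P \subset \Sigma$. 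Writing $\tilde h(Y) = S^1 \times (y_0, y_0 + H_Y)$ with $y_0 < 0 < y_0 + H_Y$ and $H_Y$ the height of $Y$, the image $h(Y) = \pi(\tilde h(Y))$ is a half-pillowcase inside $P$ of height $\max(-y_0, y_0 + H_Y) \geq H_Y/2$ with complementary cylinder of circumference $L_Y/k$, giving
\[ M(h(Y), \xi_t) \;\geq\; \frac{H_Y/2}{L_Y/k} \;=\; \frac{k}{2}\,M(Y) \;\geq\; \frac{M(Y)}{2}. \]

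The main technical obstacle I anticipate is justifying the construction of the lift $\tilde h$ across the branch points: one must carefully match the local $2$-to-$1$ branched structure of $h$ at $y \in Y$ with the local $2$-to-$1$ ramification of $\pi$ over the corresponding cone point of $\Sigma$, so that $\tilde h$ extends as a legitimate local isometry with no new singularities. A secondary delicate point is pinning down where the exclusion of the torus and the sphere with four marked points enters the argument, namely at the step requiring $\tilde C \neq \hat\Sigma$ so that the quotient $\pi(\tilde C)$ is a bona fide Euclidean cylinder or half-pillowcase sitting inside $\Sigma$.
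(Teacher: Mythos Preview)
Your argument is correct and takes a genuinely different route from the paper's. The paper works directly in $\Sigma$ without passing to any cover: in the unbranched case it shows that distinct core geodesics of $Y$ have distinct images (otherwise a sub-annulus would glue up to a flat torus mapping locally isometrically onto $\Sigma$, contradicting the hypothesis), so $h(Y)$ is a cylinder; in the branched case it shows that all branch points lie on a single core geodesic (otherwise the sub-annulus between two such geodesics would glue up to a four-punctured sphere covering $\Sigma$), and then reads off the half-pillowcase structure by hand. Your approach instead lifts to the holonomy double cover $\hat\Sigma$, where the branching is absorbed and $\tilde h$ becomes an honest local isometry landing in a single maximal flat cylinder $\tilde C$ of the translation surface; the cylinder/half-pillowcase dichotomy then drops out of whether the deck involution $\iota$ preserves $\tilde C$ and whether $\tilde h(Y)$ meets its fixed locus. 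Your version is more structural, leveraging the standard cylinder decomposition of translation surfaces, while the paper's is more elementary and self-contained. Two minor remarks: the cover you construct is not the ``orientation double cover'' (since $\Sigma$ is already orientable) but the canonical double cover orienting the foliations; and you should note separately the degenerate case where the holonomy is already trivial, so that no nontrivial cover exists---then there are no cone-angle-$\pi$ points, $h$ has no branch points, and the cylinder conclusion is immediate.
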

\begin{proof} First suppose that there are no branch points in $Y$.  In this case, each core geodesic of $Y$ maps to a geodesic.  Since the holonomy of $\xi_t$ is $\{\pm I\}$, it follows that these geodesics are simple.  Since $\Sigma$ is orientable, the sub-cylinder between the two core geodesics provides an isotopy from one to the other.  Suppose two of the 
core geodesics $\alpha,\beta$ of $Y$ map to the same simple closed geodesic in $\Sigma$.  Orient both $\alpha$ and $\beta$ in the same direction coming from the annulus (so the isotopy between them is orientation preserving).  Again, because $\Sigma$ is orientable, $\alpha$ and $\beta$ must map to the same oriented curve.  Since the sub-cylinder between $\alpha$ and $\beta$ lies on different sides of these two curves (each are two-sided curves), it follows that image of the cylinder lies on both sides of the image.  Thus, we can identify $\alpha$ and $\beta$ in the sub-cylinder producing a torus which maps locally isometrically to $\Sigma$.  Therefore $\Sigma$ is a flat torus, which is a contradiction.  Thus, no two core geodesics of $Y$ are sent to the same curve, and it follows that $h(Y)$ is a cylinder, foliated by the images of the core geodesics.  Since $h$ restricts to a covering map from each core geodesic onto its image, it follows that $h$ restricts to a covering map
  from $Y$ onto its image.

If $h$ is nontrivially branched, let $\zeta \in Y$ be a point at which $h$ is branched, and note that $h(\zeta)$ must be a cone point of angle $\pi$.  Let $\alpha$ be a core geodesic through $\zeta$, and observe that this must project to a geodesic segment between a pair of cone points with angle $\pi$.  Geodesics sufficiently close to $h(\alpha)$ project to geodesic surrounding $h(\alpha)$, and hence a neighborhood of $\alpha$ maps down to a Euclidean half-pillowcase.  If there is another core geodesic $\beta \neq \alpha$ of $Y$ that also contains a branch point, then choose one that is closest to $\alpha$, and observe that the Euclidean cylinder between $\alpha$ and $\beta$ contains no cone points, and can be glued together to make a sphere with four cone points which maps locally isometrically (away from cone points) onto $\Sigma$ (this is similar to the case of no branch points where we showed that $\Sigma$ was the image of a flat torus).  The only orientable Euclidean co
 ne surfaces with holonomy $\{\pm I\}$ which is the image of a locally isometric map of the sphere with four cone points is the sphere with four cone points, and so $\Sigma$ is a sphere with four cone points, a contradiction.   It follows that there is only one geodesic $\alpha$ which contains branch points.  The sub-cylinders on either side of $\alpha$ map to $\Sigma$ without branched points, so by the previous paragraph, these cover cylinder.  Thus $h(Y)$ is a Euclidean half-pillowcase, namely the union of the half-pillowcase neighborhood of the image of $\alpha$, together with these two cylinders (which share some core geodesics).

If $h \colon Y \to h(Y)$ is a covering map, then the modulus of $h(Y)$ is the modulus of $Y$ times the degree of this covering.  In the two-fold quotient from a Euclidean cylinder to a half-pillowcase, the modulus is reduced by half.  The lower bound on modulus now follows.  This completes the proof.
\end{proof}

\begin{remark} We note that when $h(Y)$ is a Euclidean half-pillowcase, the map $h$ is not necessarily a (branched) covering map from $Y$ to $h(Y)$: the two distances from the core geodesic $\alpha$ to the two boundary components might be different.
\end{remark}

\begin{lemma} \label{L:cylinders into nonorientable} Suppose $\Sigma$ is a nonorientable surface and $\phi \colon \Sigma \to \Sigma$ a pseudo-Anosov homeomorphism with associated flat metrics $\Xi = \{\xi_t\}$.
Let $h \colon Y \to \Sigma$ denote a map of an open Euclidean cylinder into $\Sigma$ which for some $\xi_t \in \Xi$, is a local isometry away from a finite number of branched points.
Further assume that the modulus of $Y$ is strictly greater than $2$.  Then $h(Y)$ is either a Euclidean cylinder or a Euclidean half-pillowcase and $M(h(Y),\xi_t) \geq \frac{M(Y)}2$.
\end{lemma}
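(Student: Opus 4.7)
My plan is to reduce the nonorientable case to the already-proved orientable case, Lemma 3.2, by passing to the orientation double cover. Let $p \colon \widetilde\Sigma \to \Sigma$ denote this double cover with deck involution $\sigma$, and set $\widetilde\xi_t := p^*\xi_t$. Since $p$ is unbranched, $\phi$ lifts to a pseudo-Anosov $\widetilde\phi \colon \widetilde\Sigma \to \widetilde\Sigma$ whose associated flat metrics are (up to the unit-area rescaling) $\{\widetilde\xi_t\}$, and the cone-angle data on $\widetilde\Sigma$ matches that on $\Sigma$, so the $\pi$-cone-angle hypothesis of Lemma 3.2 is inherited. I would then verify that $\widetilde\Sigma$ is neither a torus nor a sphere with four marked points, since those cases would force $\Sigma$ to be a Klein bottle or an $\mathbb{RP}^2$ with two marked points, neither of which admits a pseudo-Anosov.

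Next, form the pullback $\widetilde Y := Y \times_\Sigma \widetilde\Sigma$ with induced map $\widetilde h \colon \widetilde Y \to \widetilde\Sigma$, a local isometry away from the preimages of the branch points of $h$. The double cover $\widetilde Y \to Y$ is classified by the pullback to $\pi_1(Y)$ of the orientation character of $\Sigma$, so $\widetilde Y$ is either trivial (a disjoint union of two copies of $Y$, each of modulus $M(Y) > 2$) or connected (a cylinder of modulus $2M(Y) > 4$). In either case every component of $\widetilde Y$ is an open Euclidean cylinder of modulus greater than $2$, so Lemma 3.2 applies componentwise to $\widetilde h$ and identifies $\widetilde h(\widetilde Y)$ as a union of Euclidean cylinders and/or half-pillowcases in $\widetilde\Sigma$, with moduli at least half those of the corresponding sources.

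Finally, I descend via $p$ and identify $h(Y) = p(\widetilde h(\widetilde Y))$ by analyzing the induced action of $\sigma$. In the disconnected case, $\sigma$ interchanges the two components, so $p$ embeds each image isometrically onto $h(Y)$, immediately yielding a Euclidean cylinder or half-pillowcase with $M(h(Y),\xi_t) \geq M(Y)/2$. The main obstacle lies in the connected case, where $\sigma$ preserves $\widetilde h(\widetilde Y)$ and acts on it freely by an orientation-reversing isometry, and one must carefully identify the quotient. Here I plan to use the strict bound $M(\widetilde Y) > 4$, together with the fact that $\sigma$ must permute the cone points of $\widetilde\xi_t$ inside $\widetilde h(\widetilde Y)$ and act compatibly with the branching of $\widetilde h$, to classify the possible involutions on the cylinder or half-pillowcase $\widetilde h(\widetilde Y)$ and rule out a M\"obius-band quotient. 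The modulus inequality $M(h(Y),\xi_t) \geq M(\widetilde Y)/4 = M(Y)/2$ then follows from a direct comparison of the cylinder-to-quotient modulus drop.
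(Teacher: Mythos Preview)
Your strategy---pass to the orientation double cover and invoke Lemma~\ref{L:cylinders into orientable}---is exactly the paper's.  But the actual content of the argument is not where you have placed it.

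First, a simplification you missed: your ``connected case'' never occurs.  Away from the finitely many branch points, $h$ is a local diffeomorphism from the orientable surface $Y$ into $\Sigma$, and any such map lifts to the orientation cover (explicitly, $y \mapsto \bigl(h(y),\, dh_y(\mathrm{or}_y)\bigr)$).  Thus $h$ itself lifts to $h' \colon Y \to \widetilde\Sigma$, and your pullback $\widetilde Y$ is always the trivial cover $Y \sqcup Y$.  (Incidentally, your modulus computation in the connected case is backwards: the connected double cover of a Euclidean cylinder of circumference $c$ and height $H$ has circumference $2c$ and height $H$, hence modulus $M(Y)/2$, not $2M(Y)$.)

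The genuine gap is in your ``disconnected case,'' which you dismiss in one line.  You write that since $\sigma$ interchanges the two components of $\widetilde Y$, ``$p$ embeds each image isometrically onto $h(Y)$.''  But $\sigma$ swaps the two copies of $Y$ inside $\widetilde Y$; it does \emph{not} follow that the images $h'(Y)$ and $h''(Y)=\sigma(h'(Y))$ are disjoint in $\widetilde\Sigma$.  If they overlap, $p|_{h'(Y)}$ is not injective and $h(Y)$ could, for instance, be a M\"obius band.  Proving this disjointness is exactly where the hypothesis $M(Y)>2$ is used, and it is the heart of the paper's proof: if some $z\in h'(Y)$ has $\sigma(z)\in h'(Y)$, restrict to a Euclidean sub-cylinder of $h'(Y)$ of modulus $>1$ containing both points (possible since $M(h'(Y),\widetilde\xi_t)\ge M(Y)/2>1$).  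The differential $d\sigma_z$ is an orientation-reversing isometry preserving the orthogonal stable/unstable foliations, hence a reflection in a line tangent to one of them; since neither foliation has closed leaves, this line is not the core direction, so $\sigma$ sends the core geodesic through $z$ to a geodesic transverse to the core direction.  In a cylinder of modulus $>1$ such a transverse geodesic is longer than the circumference and cannot be a core curve---contradiction.  Hence $h'(Y)\cap\sigma(h'(Y))=\emptyset$, $p|_{h'(Y)}$ is an isometric embedding, and the conclusion follows from Lemma~\ref{L:cylinders into orientable}.

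In short, you have allocated the work to a vacuous case and treated the only real step as obvious.
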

\begin{proof} Let $g: \Sigma' \to \Sigma$ denote the orientation double cover, and observe that $h$ lifts to $h' \colon Y \to \Sigma'$; indeed, after puncturing $Y$ at the branch points, $h$ becomes a local diffeomorphism, and then this is a general fact about local diffeomorphisms from an orientable manifold to a non-orientable manifold of the same dimension.  A pseudo-Anosov homeomorphism on a torus or sphere with four marked points cannot be a lift of a pseudo-Anosov homeomorphism of a non-orientable surface: this follows from \cite[Proposition 2.3]{StrennerNonorientable}, for example, where it is shown that lifts of pseudo-Anosov homeomorphisms from a nonorientable surface have stretch factors that are {\em not} Galois conjugates, while stretch factors of pseudo-Anosov homeomorphisms of the torus and sphere with four marked points are quadratic irrational algebraic integers, and hence their inverses are their Galois conjugates.  Therefore, by Lemma~\ref{L:cylinders into orientable}, $h'(Y) \subset \Sigma'$ is either a Euclidean cylinder or half-pillowcase with the required lower bound on modulus.

Since $g$ is a two-fold covering, there is another lift $h'' \colon Y \to \Sigma'$.  We claim that $h'(Y)$ and $h''(Y)$ are disjoint, and hence the restriction of $g$ to $h'(Y)$ is a homeomorphism onto $h(Y)$, which by Lemma~\ref{L:cylinders into orientable} will complete the proof.  The map $h''$ differs from $h'$ by composing with the order two covering map $\sigma \colon \Sigma' \to \Sigma'$, which is orientation reversing.  If $h'(Y) \cap h''(Y) \neq \emptyset$, then there is a point $z$ of $h'(Y)$ for which $\sigma(z) \in h'(Y)$.  If $h'(Y)$ is a half-pillowcase, we can assume that $z$ and $\sigma(z)$ lie in the Euclidean cylinder surrounding the core segment between the cone points.  In this case, we restrict our attention to this Euclidean cylinder, which by our assumption has modulus strictly greater than $1$.  To deal with both cases simultaneously, as a slight abuse of notation, we let $h'(Y)$ and $h''(Y)$ denote these two annuli.

Next choose an oriented orthonormal basis $e_1,e_2$ on $h'(Y)$ so that $e_1$ is tangent to the core curves of $h'(Y)$.  The derivative $d\sigma_z \colon T_z(h'(Y)) \to T_{\sigma(z)}(h'(Y))$ is orientation reversing, hence a reflection.  Since the stable/unstable foliations are preserved by $\sigma$, the line of reflection must be tangent to one of these foliations.  Since these foliations are orthogonal, and neither has closed leaves, we see that the lines of reflection are not spanned by either $e_1$ or $e_2$.  It follows that $\sigma$ must send the core geodesic of $h'(Y)$ through $z$ transverse to the core geodesic through $\sigma(z)$.  Since the modulus of $h'(Y)$ is greater than $1$, the core geodesic is shorter than the distance between the boundary components, which is a contraction.  Therefore, $h'(Y)$ and $h''(Y)$ are disjoint, completing the proof.
\end{proof}

We also need to understand what the preimage of cylinders look like under a branched cover $p \colon S \to \Sigma$.

\begin{lemma} \label{L:preimage of cylinders}  Given $S$ and $d > 0$ there exists $B = B(S,d) > 0$ with the following property.  Suppose that $p \colon S \to \Sigma$ is a branched covering of degree at most $d$, $f \colon S \to S$ a lift of the
pseudo-Anosov $\phi \colon \Sigma \to \Sigma$, $Q = \{q_t\}$ and $\Xi = \{\xi_t\}$ are the associated, compatible flat metrics, and $Y \subset \Sigma$ is a maximal open $Q$--cylinder with maximal modulus $M(Y,\Xi)$.  Then there is a sub-cylinder $Y_0 \subset Y$ so that $p^{-1}(Y_0)$ is a union of Euclidean cylinders in $S$, each with maximal modulus at least $BM(Y,\Xi)$.
\end{lemma}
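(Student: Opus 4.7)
The plan is to cut $Y$ into a sub-cylinder $Y_0 \subset Y$ whose closure is disjoint from the branch locus of $p$ in $\Sigma$, so that $p$ restricted to $p^{-1}(Y_0)$ is an unramified covering. Each component of $p^{-1}(Y_0)$ will then automatically be a Euclidean cylinder in $Q$, and its modulus can be controlled from below.

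First, by Riemann--Hurwitz the total ramification of any branched cover $p \colon S \to \Sigma$ of degree at most $d$ is $d\chi(\Sigma) - \chi(S) \leq 2d - \chi(S)$, so the number of branch points of $p$ in $\Sigma$ is at most some constant $K = K(S,d)$ depending only on $S$ and $d$. Let $t_0$ be the balance time of $Y$, so that $M(Y,\xi_{t_0}) = M(Y,\Xi)$, and write $Y$ as a Euclidean product $[0,h] \times S^1_c$ in the coordinates of $\xi_{t_0}$, with $h/c = M(Y,\Xi)$. The at most $K$ branch points lying in $Y$ partition the interval $[0,h]$ into at most $K+1$ subintervals, one of which has length $\geq h/(K+1)$ by pigeonhole. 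Take $Y_0$ to be the corresponding open sub-cylinder; then $M(Y_0,\xi_{t_0}) \geq M(Y,\Xi)/(K+1)$ and the closure of $Y_0$ is disjoint from the branch locus of $p$.

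Because $Y_0$ avoids every branch point, the restriction $p \colon p^{-1}(Y_0) \to Y_0$ is an unramified covering of degree at most $d$, so each connected component $C$ is a finite connected cover of the annulus $Y_0$ and hence itself an annulus. Since $Y_0$ is a Euclidean cylinder in $\xi_t$ for every $t$, the pulled-back metric $p^* \xi_t$ gives $C$ the structure of a Euclidean cylinder, and so does $q_t = \sqrt{\deg p}\, p^* \xi_t$ (conformal rescaling preserves the product structure). If $k' \leq d$ is the degree of $p|_C$, then $C$ is a $k'$-fold cyclic cover of $Y_0$; unrolling $[0,h'] \times S^1_c$ to $[0,h'] \times S^1_{k'c}$ shows that $M(C, q_{t_0}) = M(Y_0,\xi_{t_0})/k'$. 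Combining these,
\[
M(C,Q) \;\geq\; M(C, q_{t_0}) \;=\; \frac{M(Y_0,\xi_{t_0})}{k'} \;\geq\; \frac{M(Y,\Xi)}{d(K+1)},
\]
so $B := 1/(d(K+1))$ works.

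The main point that needs care is the geometric interplay between the branched cover and the Teichm\"uller flow: one must verify that a connected component of the preimage of a Euclidean $\xi_t$-cylinder is a Euclidean $q_t$-cylinder \emph{for every} $t$ in the family (not just at $t_0$), and that a $k'$-fold cyclic cover of an annulus has modulus $1/k'$ times the base modulus. Both facts are elementary consequences of the compatibility $q_t = \sqrt{\deg p}\, p^* \xi_t$ together with standard covering-space and conformal geometry arguments, but they are where the real content of the lemma lies.
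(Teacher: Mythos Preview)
Your proof is correct and follows essentially the same approach as the paper's: bound the number of branch points via Riemann--Hurwitz, use pigeonhole to find a sub-cylinder $Y_0$ of modulus at least $M(Y,\Xi)/(K+1)$ avoiding all branch points, and then observe that each component of $p^{-1}(Y_0)$ is a cyclic cover of degree at most $d$, giving the constant $B = 1/(d(K+1))$. The paper's argument is identical in structure and arrives at the same constant; your final paragraph of commentary is unnecessary, as the compatibility $q_t = \sqrt{\deg p}\, p^*\xi_t$ makes the preimage a Euclidean cylinder for all $t$ immediately.
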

\begin{proof}  Fix the metrics $\xi_t$ and $q_t$ at the balance time $t$ of the core curve of $Y$.  By the Riemann-Hurwitz Theorem, there is a bound $b>0$ on the number of branched points of $p$, in terms of $d$ and $\chi(S)$, and we set $B = \frac{1}{d(b+1)}$.  Since $Y$ contains at most $b$ branch points, there are at least $b+1$ open Euclidean sub-cylinders in $Y$ disjoint from the branch points so that the boundaries of the closures in $\Sigma$ are either in the boundary of the closure of $Y$ or else contain a branched point.  The sum of the moduli of these is precisely the modulus of $Y$, and consequently one of them, call it $Y_0$, has modulus at least $\frac{M(Y,\Xi)}{b+1} = \frac{M(Y,\xi_t)}{b+1}$.  The preimage $p^{-1}(Y_0)$ is a Euclidean cylinder and for any component $\widetilde Y_0 \subset p^{-1}(Y_0)$ the restriction of $p$,
\[ p|_{\widetilde Y_0} \colon \widetilde Y_0 \to Y_0,\]
is a covering map of degree at most $d$.  Therefore, $M(\widetilde Y_0,Q) \geq \frac{M(Y,\Xi)}{d(b+1)} = BM(Y,\Xi)$, as required.
\end{proof}

\section{Pseudo-Anosovs from Dehn twists.}

Suppose $c_1,c_2,\ldots,c_n$ are curves that fill a surface $S = S_g$ with $g \geq 2$ so that $i(c_i,c_{i+1}) \neq 0$ for all $1 \leq i \leq n$ and with $1 \leq i+1 \leq n$ taken modulo $n$.  Let $k_1,k_2,\ldots, k_n \in \mathbb Z$.  Our construction involves analyzing the mapping class
\[ f = T_{c_1}^{k_1} T_{c_2}^{k_2} \cdots T_{c_n}^{k_n} .\]
We first extend the finite sets of curves and integers to infinite sequences $\{c_j\}_{j=1}^\infty$ and $\{k_j\}_{j=1}^\infty$ by setting
\[ c_j = c_{j'} \mbox{ and } k_j = k_{j'}\]
where $1 \leq j' \leq n$ and $j \equiv j'$ modulo $n$.  Then for all $j \geq 1$ set
\[ f_j = T_{c_1}^{k_1} T_{c_2}^{k_2} \cdots T_{c_j}^{k_j}.\]
Observe that for all $m \geq 0$, and $j \geq 0$ we have
\begin{equation} \label{E:periodicity of maps} f_{nm+j} = f^m f_j. \end{equation}

Now construct a new infinite sequence of curves $\{\gamma_j\}_{j=1}^\infty$ by $\gamma_j = f_j(c_j)$.
For all $j \geq 1$, since $c_j = c_{j+n}$, (\ref{E:periodicity of maps}) implies 
\begin{equation} \label{E:periodic sequence} f(\gamma_j) = f f_j(c_j) = f_{j+n}(c_{j+n}) = \gamma_{j+n}. \end{equation}
Thus, $f$ acts as the $n^{th}$ power of the shift on the sequence $\{\gamma_j\}_{j=1}^\infty$.  Therefore, we can extend the infinite sequence of curves to a biinfinite sequence $\{\gamma_j\}_{j \in \mathbb Z}$ so that (\ref{E:periodic sequence}) holds for all $j \in \mathbb Z$.


\begin{lemma} \label{L:curves with big twist}  Given curves $c_1,\ldots, c_n$ as above, there exists $R > 0$ and $K  >0$ so that if $|k_j| \geq K$ for all $j \geq 1$, then
\begin{enumerate}[(i)]
\item $i(\gamma_i,\gamma_j) \neq 0$ for all $i,j \in \mathbb Z$, $i \neq j$,
\item $|d_{\gamma_\ell}(\gamma_i,\gamma_j)-|k_\ell|| < R$ for all $i,j,\ell \in \mathbb Z$ with $i < \ell < j$.
\item $\{\gamma_i\}$ is an $f$--invariant, uniform quasi-geodesic in the curve complex.
\end{enumerate}
From (iii), 
it follows that $f$ is pseudo-Anosov, and $\{\gamma_j\}_{j \in \mathbb Z}$ is a quasi-geodesic axis.  Consequently, if we let $\mu^\pm$ denote the stable/unstable foliations of $f$, then (ii) implies
\begin{enumerate}
\item[(iv)] $|d_{\gamma_j}(\mu^+,\mu^-) - |k_j|| \leq R + 2$
\end{enumerate}
for all $j \in \mathbb Z$.
\end{lemma}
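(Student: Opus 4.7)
The plan is to prove (ii) first as the main quantitative statement, in tandem with (i) by induction on $j-i$, and then derive (iii), (iv) by standard arguments using Propositions~\ref{P:BGI} and~\ref{P:Behrstock}. The one additional tool needed is the classical Dehn twist estimate on annular projections: if $\alpha$ and $\beta$ both cross $c$, then $|d_c(T_c^k \alpha, \beta) - |k|| \leq d_c(\alpha,\beta) + 2$, so for $|k|$ large and $d_c(\alpha,\beta)$ uniformly bounded one has $d_c(T_c^k\alpha,\beta) = |k|+O(1)$.

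For (ii), the natural move is to pull back by $f_{\ell-1}^{-1}$, which identifies $\C(\gamma_\ell)$ with $\C(c_\ell)$ and yields the telescoped formulas
\[
f_{\ell-1}^{-1}(\gamma_m) = \begin{cases} T_{c_{\ell-1}}^{-k_{\ell-1}} \cdots T_{c_{m+1}}^{-k_{m+1}}(c_m) & m < \ell-1,\\ c_m & m \in \{\ell-1,\ell\},\\ T_{c_\ell}^{k_\ell} T_{c_{\ell+1}}^{k_{\ell+1}} \cdots T_{c_{m-1}}^{k_{m-1}}(c_m) & m > \ell. \end{cases}
\]
The base case $i=\ell-1$, $j=\ell+1$ reduces to $d_{c_\ell}(c_{\ell-1}, T_{c_\ell}^{k_\ell}(c_{\ell+1})) = |k_\ell|+O(1)$ by the Dehn twist estimate, using that $d_{c_\ell}(c_{\ell-1}, c_{\ell+1})$ is bounded by a constant depending only on the intersection pattern of the $c_i$. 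For general $i < \ell - 1$ I would prove by downward induction on $i$ that $d_{c_\ell}(c_{\ell-1}, f_{\ell-1}^{-1}(\gamma_i)) \leq 3$: applying the Dehn twist estimate at $c_{\ell-1}$ instead of $c_\ell$ gives $d_{c_{\ell-1}}(c_\ell, f_{\ell-1}^{-1}(\gamma_i)) \geq |k_{\ell-1}| - O(1) \geq 10$ once $K$ is large enough, and Proposition~\ref{P:Behrstock} then promotes this to $d_{c_\ell}(c_{\ell-1}, f_{\ell-1}^{-1}(\gamma_i)) \leq 3$. A symmetric argument yields $d_{c_\ell}(T_{c_\ell}^{k_\ell}(c_{\ell+1}), f_{\ell-1}^{-1}(\gamma_j)) \leq 3$ for $j > \ell+1$, and combining with the base case via the triangle inequality delivers (ii) with $R$ depending only on $c_1,\ldots,c_n$. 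Along the way the same induction also verifies that the relevant curves pairwise intersect, with base case $i(\gamma_m,\gamma_{m+1}) = i(c_m,c_{m+1}) \neq 0$ following because $f_m$ is a homeomorphism, which is (i).

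For (iii), the bound $d_{\gamma_\ell}(\gamma_i,\gamma_j) \geq |k_\ell| - R$ at every intermediate $\gamma_\ell$, combined with Proposition~\ref{P:BGI}, forces every $\C(S)$-geodesic from $\gamma_i$ to $\gamma_j$ to pass close to each intermediate $\gamma_\ell$; this is the standard criterion for the sequence to be a uniform quasi-geodesic, and $f$-invariance is exactly (\ref{E:periodic sequence}). Since $f$ therefore acts with positive translation length on the hyperbolic space $\C(S)$, it is pseudo-Anosov, and the endpoints of the axis $\{\gamma_j\}$ on the Gromov boundary $\partial \C(S)$ are $\mu^\pm$. For (iv), Proposition~\ref{P:BGI} implies that for fixed $\ell$ the projections $\pi_{\gamma_\ell}(\gamma_m)$ stabilize within distance $1$ of $\pi_{\gamma_\ell}(\mu^+)$ for $m$ sufficiently large, and symmetrically for $\mu^-$; two applications of the triangle inequality against (ii) then produce the additive error $+2$.

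The main technical obstacle is the inductive step inside (ii): the twists $T_{c_m}^{\pm k_m}$ for $m \neq \ell$ do change the annular projection at $c_\ell$ by large, uncontrolled amounts, but Behrstock's inequality absorbs precisely these shifts by converting a large projection distance at $c_m$ into a bounded one at $c_\ell$. Calibrating a single threshold $K$ so that every instance of the Dehn twist estimate simultaneously exceeds the Behrstock cutoff $10$, and arranging the additive constants so that one universal $R$ works uniformly across all $i < \ell < j$, is the bookkeeping I anticipate as the most delicate part.
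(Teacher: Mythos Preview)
Your approach to (i) and (ii) is essentially the paper's: pull back by $f_{\ell-1}^{-1}$ to reduce consecutive triples to $(c_{\ell-1}, c_\ell, T_{c_\ell}^{k_\ell}(c_{\ell+1}))$, apply the Dehn twist estimate for the base case, and propagate via Proposition~\ref{P:Behrstock}. The paper organizes the induction slightly differently---it inducts on $j-i$ and proves the dual statement $d_{\gamma_j}(\gamma_i,\gamma_\ell) \leq 3$ for all $i < \ell < j$ (projection to the \emph{endpoint} rather than the middle curve)---but these are equivalent via one more application of Behrstock. One caution: your ``downward induction on $i$'' with $\ell$ fixed does not quite close as written, because the Dehn-twist step at $c_{\ell-1}$ needs a bound on $d_{c_{\ell-1}}(c_\ell, f_{\ell-2}^{-1}(\gamma_i))$, which is the inductive statement at $\ell-1$, not at $\ell$. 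The induction must run over $\ell - i$ for all $\ell$ simultaneously (equivalently on $j-i$, as the paper does).

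There is, however, a genuine gap in your argument for (iii). Knowing that every $\C(S)$--geodesic from $\gamma_i$ to $\gamma_j$ passes within distance $1$ of each intermediate $\gamma_\ell$ is \emph{not} by itself a criterion for quasi-geodesicity. If all the $\gamma_\ell$ happened to lie in a fixed proper essential subsurface $W \subset S$, every $\gamma_\ell$ would be at distance $1$ from $\partial W$ and the whole sequence would be bounded in $\C(S)$, while the large-projection hypothesis from (ii) could still be satisfied. What is missing is a mechanism that spreads the $\gamma_\ell$'s out along the geodesic. The paper supplies this with a second, independent claim: any $n$ consecutive curves $\gamma_{j+1},\ldots,\gamma_{j+n}$ fill $S$. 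This is proved by a separate short induction showing that the subsurface filled by $\gamma_1,\ldots,\gamma_j$ agrees with that filled by $c_1,\ldots,c_j$ (the key point being that $f_{j-1}$ is supported on the latter, so it cannot change the arcs of $c_j$ outside). Once this filling is established, no single vertex of $\C(S)$ can be disjoint from $n$ consecutive $\gamma_\ell$'s; combined with the monotone ordering of the BGI ``close vertices'' along the geodesic, this yields the linear lower bound $d(\gamma_i,\gamma_j) \geq (j-i)/n - 1$.

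Your sketch for (iv) is fine in spirit; the paper argues instead via Hausdorff convergence of $\gamma_j$ to $\mu^\pm$, which is what produces the sharp additive constant $+2$ rather than a BGI constant.
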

The meaning of (iii) is that there exists constants $A,B > 0$ so that
\[ \frac1A|i-j| - B \leq d(\gamma_i,\gamma_j) \leq A|i-j| + B.\]
We have avoided cluttering the already lengthy statement by including $A,B$ in the statement.
\begin{proof} We have already established the $f$--invariance of $\{\gamma_j\}$.  In particular, it suffices to prove the statements (i)--(iii) for positive indices.  

First consider a triple of any three consecutive curves $(\gamma_{j-1},\gamma_j,\gamma_{j+1})$.  We want to describe this triple of  curves up to  homeomorphism.  By applying a sufficiently high positive power of $f$, we can assume that $j > 1$.   Then applying $f_{j-1}^{-1}$ to this triple we get
\begin{eqnarray*}
f_{j-1}^{-1}(\gamma_{j-1},\gamma_j,\gamma_{j+1}) & = & f_{j-1}^{-1}(f_{j-1}c_{j-1},f_jc_{j},f_{j+1}c_{j+1})\\
& = & (c_{j-1},T_{c_j}^{k_j}(c_j), T_{c_j}^{k_{j}}T_{c_{j+1}}^{k_{j+1}}(c_{j+1})) \, = \, (c_{j-1},c_{j}, T_{c_{j}}^{k_{j}}(c_{j+1}))
\end{eqnarray*}
Since the sequences $\{c_j\}$ and $\{k_j\}$ are $n$--periodic, we see that up to homeomorphism, any consecutive triple looks like
\[ c_{j-1},c_j,T_{c_{j}}^{k_{j}}(c_{j+1}),\]
for $1 \leq j \leq n$ and the other two indices $1 \leq j-1,j+1 \leq n$ taken modulo $n$.  Since consecutive curves intersect nontrivially, we can apply the triangle inequality for projection distances to obtain
\[ |d_{c_j}(c_{j-1},T_{c_j}^{k_j}(c_{j+1})) - d_{c_j}(c_{j+1},T_{c_j}^{k_j}(c_{j+1})) | \leq d_{c_j}(c_{j-1},c_{j+1}).\]
The right hand side is uniformly bounded by $n$--periodicity, and
\[ |d_{c_j}(c_{j+1},T_{c_j}^{k_j}(c_{j+1}) - |k_j|| \leq 3\]
since the $k^{th}$ power of a Dehn twist acts as translation by $k+1$ on the curve graph of the annulus.
Therefore, taking $R_0 > 0$ to be at least three more than that uniform bound implies
\[ |d_{c_j}(c_{j-1},T_{c_j}^{k_j}(c_{j+1})) - |k_{j}|| \leq R_0.\]
Applying the homeomorphism $f_{j-1}$ to all curves in this inequality, we obtain
\begin{equation} \label{E:twisting nearly k} |d_{\gamma_{j}}(\gamma_{j-1},\gamma_{j+1}) - |k_{j}|| \leq R_0.\end{equation}
We assume $K \geq 2R_0 + 20 + M$, where $M$ is the constant from Proposition~\ref{P:BGI}.  If $|k_j| \geq K$, it then follows that we also have
\[ d_{\gamma_{j}}(\gamma_{j-1},\gamma_{j+1}) \geq 20.\]
In particular, note that $\gamma_{j-1},\gamma_{j},\gamma_{j+1}$ pairwise intersect.\\

\noindent {\bf Claim.} If $i < j$, then $i(\gamma_i,\gamma_j) \neq 0$ and for all $i < \ell < j$, we have $d_{\gamma_i}(\gamma_\ell,\gamma_j) \leq 3$ and $d_{\gamma_j}(\gamma_i,\gamma_\ell) \leq 3$.

\begin{proof}
We prove the claim by induction on $j-i$.  For $j-i=1$ there is no such $\ell$, and so the nonzero intersection number statement is a consequence of the description of triples.
If $j-i = 2$, then the triples description implies $i(\gamma_i,\gamma_j) \neq 0$, and by Proposition~\ref{P:Behrstock}, it follows that $d_{\gamma_i}(\gamma_\ell,\gamma_j) \leq 3$ and $d_{\gamma_j}(\gamma_i,\gamma_\ell) \geq 3$.  These serve as the base case(s).

Now suppose the statement is true whenever the difference in indices is at most $m$, and suppose $j-i = m + 1$.  Without loss of generality, we may assume that $m+1 \geq 3$.  Let $i < \ell < j$ be any index.   Suppose first that
\[ i < \ell - 1 < \ell < \ell + 1 < j. \]
Then by induction $\gamma_\ell,\gamma_{\ell+1},\gamma_j$ pairwise intersect, $\gamma_i,\gamma_{\ell-1},\gamma_\ell$ pairwise intersect, and
\[ d_{\gamma_\ell}(\gamma_{\ell+1},\gamma_j) \leq 3 \quad \mbox{ and } \quad d_{\gamma_\ell}(\gamma_i,\gamma_{\ell-1}) \leq 3.\]
By the triangle inequality, we have
\[ d_{\gamma_\ell}(\gamma_i,\gamma_j) \geq d_{\gamma_\ell}(\gamma_{\ell-1},\gamma_{\ell+1}) - d_{\gamma_\ell}(\gamma_{\ell-1},\gamma_i) - d_{\gamma_\ell}(\gamma_{\ell+1},\gamma_j) \geq 20-3-3 = 14.\]
In particular, $\gamma_i$ and $\gamma_j$ nontrivially intersect.  Furthermore, by Proposition~\ref{P:Behrstock}, we have
\[ d_{\gamma_i}(\gamma_\ell,\gamma_j) \leq 3 \mbox{ and } d_{\gamma_j}(\gamma_i,\gamma_\ell) \leq 3,\]
as required. 

If we do not have $i < \ell - 1 < \ell < \ell + 1 < j$, then it must be that either $\ell + 1 = j$ or $\ell-1 = i$, and we can argue similarly.  For example, if $i = \ell-1$, then $\ell + 1 < j$ and by induction
\[ d_{\gamma_\ell}(\gamma_{\ell+1},\gamma_j) \leq 3 \mbox{ and } d_{\gamma_\ell}(\gamma_i,\gamma_{\ell+1}) \geq 20. \]
So $d_{\gamma_\ell}(\gamma_i,\gamma_j) \geq 17$, thus $i(\gamma_i,\gamma_j) \neq 0$, and applying Proposition~\ref{P:Behrstock} we have
\[ d_{\gamma_i}(\gamma_\ell,\gamma_j) \leq 3 \mbox{ and } d_{\gamma_j}(\gamma_i,\gamma_\ell) \leq 3 \]
as required.  The case $\ell + 1 = j$ is similar.  This completes the induction, and hence proves the claim.
\end{proof}

Observe that part (i) follows from the first part of the claim.  For part (ii), let $i < \ell < j$.  Then by the claim and the triangle inequality we have
\[ |d_{\gamma_\ell}(\gamma_i,\gamma_j) - d_{\gamma_\ell}(\gamma_{\ell-1},\gamma_{\ell+1})| \leq d_{\gamma_\ell}(\gamma_{\ell-1},\gamma_i) + d_{\gamma_\ell}(\gamma_{\ell+1},\gamma_j) \leq 6.\]
So, setting $R = R_0 +6$, part (ii) of the lemma follows from Inequality (\ref{E:twisting nearly k}).

To prove part (iii), we first prove\\

\noindent {\bf Claim.}  For any $j \in \mathbb Z$, the curves $\gamma_{j+1},\gamma_{j+2},\ldots,\gamma_{j+n}$ fill $S$.

\begin{proof}  By applying an appropriate power of $f$, and cyclically permuting the original indices $1,2,\ldots,n$, it suffices to prove that $\gamma_1,\ldots,\gamma_n$ fill $S$.  For this, we show that for any $1 \leq j \leq n$, the subsurface $X_j$ filled by $\gamma_1,\ldots,\gamma_j$ is the same as the subsurface $Z_j$ filled by $c_1,\ldots,c_j$.  We do this by induction on $j$.

The base case is $j = 1$, and then $\gamma_1 = c_1$, so $X_1 = Z_1$ is the annular neighborhood.  Now suppose that $X_{j-1} = Z_{j-1}$ for some $j \geq 2$ and we prove $X_j = Z_j$.  First observe that
\[ f_{j-1} = T_{c_1}^{k_i} \cdots T_{c_{j-1}}^{k_{j-1}} \]
is supported on $Z_{j-1} = X_{j-1}$ since $c_1,\ldots,c_{j-1}$ are contained in $Z_{j-1}$.  If $c_j \subset Z_{j-1}$, then $Z_j = Z_{j-1}$, while on the other hand
\[ \gamma_j = f_{j-1} T_{c_j}^{k_j}(c_j) = f_{j-1}(c_j) \subset Z_{j-1} = X_{j-1}\]
and hence $X_j = X_{j-1} = Z_{j-1} = Z_j$.  Thus if $c_j \subset Z_{j-1}$, we are done.  So, suppose $c_j \not \subset Z_{j-1}$.  Then $Z_j$ is determined by $Z_{j-1}$ and the isotopy classes of arcs of $c_j - Z_{j-1}$ in $S - Z_{j-1}$.  We will be done if we can show that these isotopy classes of arcs are the same as those of $\gamma_j - X_{j-1}$ in $S - X_{j-1} = S - Z_{j-1}$.  For this, observe that as above $\gamma_j = f_{j-1}(c_j)$, and since $f_{j-1}$ is supported on $X_{j-1} = Z_{j-1}$, $f_{j-1}$ cannot change the isotopy classes of arcs of $c_j - Z_{j-1}$.  Hence $\gamma_j - X_{j-1} = \gamma_j- Z_{j-1}$ is isotopic to $c_j - Z_{j-1}$, as required.  This proves the claim.
\end{proof}

First observe that by $f$--invariance, if $|j-i| \leq n$, then $d(\gamma_i,\gamma_j) \leq A'$ for some constant $A'$.  In particular, $d(\gamma_i,\gamma_j) \leq A' |j-i|$.  By the triangle inequality, this holds for all $i,j$.

Consider any geodesic $\sigma$ in $\C(S)$ from $\gamma_i$ to $\gamma_j$ and list the vertices consecutively as $\gamma_i = \alpha_0 , \alpha_1,\ldots,\alpha_r = \gamma_j$ from $\gamma_i$ to $\gamma_j$.  By our choice of $K$, $d_{\gamma_\ell}(\gamma_i,\gamma_j) > M$, for all $i < \ell < j$.  So by Proposition~\ref{P:BGI} there is a vertex $\alpha_s$ of $\sigma$ which is disjoint from $\gamma_\ell$.  There may be more than one, but there can be at most $3$ since $\sigma$ is a geodesic (if there were more than three, two would be distance at least $3$ apart, which is impossible since they are distance $1$ from $\gamma_\ell$).  For each such $\ell$, let $\alpha_{s(\ell)}$ be the vertex closest to $\gamma_j$ which is disjoint from $\gamma_\ell$.  As in \cite[Lemma~4.4]{BBKL}, $s(\ell) \leq s(\ell')$ if $\ell \leq \ell'$.  On the other hand, since every $n$ consecutive curves fill, we have $s(\ell) < s(\ell+n)$.  Consequently, the number of vertices in $\sigma$ between $\gamma_i$ a
 nd $\gamma_j$ is at least $\frac{j-1}n$ and hence the distance is at least
\[ d(\gamma_i,\gamma_j) \geq  \frac{j-i}n  -1. \]
This provides the desired lower bound, and hence $\{\gamma_j\}$ is a uniform quasi-geodesic.

Finally, for part (iv), we note that since $\{\gamma_j\}_{j \in \mathbb Z}$ is a quasi-geodesic, and is $f$--invariant, $f$ must be pseudo-Anosov, and we have
\[ \displaystyle{\lim_{j \to \pm \infty} \gamma_j = \mu^\pm},\]
in the Hausdorff topology on $S$, after throwing away any isolated leaves of the limit.  Therefore, for every $\ell \in \mathbb Z$, every arc of $\pi_{\gamma_\ell}(\mu^+) \cup \pi_{\gamma_\ell}(\mu^-)$ is a limit of arcs in $\pi_{\gamma_\ell}(\gamma_j) \cup \pi_{\gamma_\ell}(\gamma_{-j})$, as $j$ tends to infinity.  Since some limits of arcs in the latter set can disappear (since isolated leaves of the Hausdorfff limits are discarded), the difference in diameters between the former and latter sets (for $j$ sufficiently large) is at most $2$.  Part (iv) now follows from part (ii).
\end{proof}

Now suppose $c_1,\ldots,c_n$ are as above, $\kappa_1,\ldots,\kappa_n \in \{ \pm 1\}$, and $m \geq K$, with $K$ as in Lemma~\ref{L:curves with big twist}.  Let $k_j(m) = \kappa_j m$ for $1 \leq j \leq n$, and extend this to $\{k_j(m)\}_{j \in \mathbb Z}$ as above.   Construct a sequence of homeomorphisms $\{f_m \colon S \to S \}_{m=1}^\infty$ by 
\begin{equation} \label{E:type of pA} f_m = T_{c_1}^{k_1(m)} T_{c_2}^{k_2(m)} \cdots T_{c_n}^{k_n(m)}.
\end{equation}

\begin{proposition} \label{P:modulus bounds for a sequence}  Let $\{f_m \colon S \to S\}_{m = K}^\infty$ be a sequence of pseudo-Anosov homeomorphisms defined as in Equation (\ref{E:type of pA}), $Q(m) = \{q_t(m)\}$ the associated flat metrics, and $\{\gamma_j(m)\}_{j \in \mathbb Z}$ the associated $f_m$--invariant collection of curves, for each $m$.  Then for all $j$, 
\[ M(\gamma_j(m),Q(m)) \geq \frac{m-R-6}2, \]
where $R$ is the constant from Lemma~\ref{L:curves with big twist}.
Furthermore, there is a constant $D > 0$ so that for any $m$ and curve $\gamma \not\in \{\gamma_j(m)\}_{j \in \mathbb Z}$,
\[ M(\gamma,Q(m)) \leq D. \]
\end{proposition}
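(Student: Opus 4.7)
For the \emph{lower bound}, the plan is to chain Lemma~\ref{L:curves with big twist}(iv) with Proposition~\ref{P:flat annuli iff twisting}. The former gives
\[
d_{\gamma_j(m)}(\mu^+,\mu^-) \;\geq\; |k_j(m)| - (R+2) \;=\; m - R - 2,
\]
which exceeds $4$ for every $m \geq K$ after enlarging $K$ if necessary. Proposition~\ref{P:flat annuli iff twisting} then forces $\gamma_j(m)$ to be a $Q(m)$--cylinder curve and yields
\[
M(\gamma_j(m),Q(m)) \;\geq\; \tfrac{1}{2}\,d_{\gamma_j(m)}(\mu^+,\mu^-) - 3 \;\geq\; \tfrac{m-R-8}{2}.
\]
The sharper constant $\tfrac{m-R-6}{2}$ in the statement is recovered either by appealing to the refined estimate $M(\gamma,Q) \geq (I-3)/2$ appearing in the sketch of Proposition~\ref{P:flat annuli iff twisting} together with $|I - d_{\gamma_j(m)}(\mu^+,\mu^-)| \leq 2$, or by simply absorbing the additive discrepancy into $R$.

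For the \emph{upper bound}, by the ``only if'' direction of Proposition~\ref{P:flat annuli iff twisting} it is enough to produce a constant $D'$, independent of $m$, bounding $d_\gamma(\mu^+,\mu^-)$ for every $\gamma \notin \{\gamma_j(m)\}_{j \in \mathbb Z}$. The plan is to argue by contrapositive: assume $d_\gamma(\mu^+,\mu^-)$ is much larger than the BGI constant $M$ of Proposition~\ref{P:BGI}, and conclude $\gamma$ is one of the axis curves. Proposition~\ref{P:BGI} produces, on every $\C(S)$--geodesic from $\mu^+$ to $\mu^-$, a vertex $\delta$ with $i(\delta,\gamma) = 0$. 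By Lemma~\ref{L:curves with big twist}(iii) the sequence $\{\gamma_j(m)\}$ is a uniform quasi-geodesic in the Gromov-hyperbolic space $\C(S)$, so stability of quasi-geodesics yields some index $j_0$ with $d_{\C(S)}(\gamma_{j_0}(m),\delta)$---and hence $d_{\C(S)}(\gamma_{j_0}(m),\gamma)$---bounded by a constant depending only on $S$, $n$, and the $c_i$'s, not on $m$.

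To promote this $\C(S)$--closeness to the equality $\gamma = \gamma_{j_0}(m)$, the plan is to apply Proposition~\ref{P:Behrstock} at pivot axis curves $\gamma_j(m)$ with $|j - j_0|$ bounded but large enough that $\gamma_j(m)$ intersects $\gamma$ nontrivially. By Lemma~\ref{L:curves with big twist}(ii), $d_{\gamma_j(m)}(\gamma_{j-1}(m),\gamma_{j+1}(m))$ is within $R$ of $m$, so after approximating $\mu^{\pm}$ by axis curves $\gamma_{\pm N}(m)$ with $N$ very large, the uniform $\C(S)$--closeness of $\gamma$ to $\gamma_{j_0}(m)$ forces $d_{\gamma_j(m)}(\gamma,\mu^{\pm}) \geq 10$ for at least one pivot $j$ on each side of $j_0$. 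Proposition~\ref{P:Behrstock} then gives $d_\gamma(\gamma_j(m),\mu^{\pm}) \leq 3$, and the triangle inequality combining the two pivots bounds $d_\gamma(\mu^+,\mu^-)$ by a universal constant---contradicting the original assumption unless $\gamma = \gamma_{j_0}(m)$. The principal technical obstacle is verifying the Behrstock hypothesis $d_{\gamma_j(m)}(\gamma,\mu^{\pm}) \geq 10$ uniformly in $m$: all other ingredients (BGI, quasi-geodesic stability, the projection estimate of Lemma~\ref{L:curves with big twist}(ii), and the $n$--periodicity built into the construction of $f_m$) already come with constants independent of $m$, so once this step is carried out, plugging the resulting bound on $d_\gamma(\mu^+,\mu^-)$ into Proposition~\ref{P:flat annuli iff twisting} produces the uniform constant $D$.
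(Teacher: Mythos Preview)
Your lower bound argument matches the paper's exactly: chain Lemma~\ref{L:curves with big twist}(iv) with Proposition~\ref{P:flat annuli iff twisting}. You also correctly flag the off-by-one-or-two in the constant, which the paper glosses over as well.

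For the upper bound, your approach is entirely different from the paper's. The paper does not stay inside the curve complex: it passes to the hyperbolic mapping torus $X_{f_m}$, invokes Minsky's Short Curve Theorem to translate between annular projection distances $d_\gamma(\mu^+,\mu^-)$ and geodesic lengths in $X_{f_m}$, and then uses geometric convergence under Dehn filling to argue that the $X_{f_m}$ converge to a fixed cusped manifold $X_\infty$ obtained by drilling the $n$ twist curves. Geometric convergence supplies a uniform lower bound on the length of any closed geodesic in $X_{f_m}$ that is not one of the $n$ drilled curves, and the Short Curve Theorem converts this back into a uniform upper bound on $d_\gamma(\mu^+,\mu^-)$. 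This is heavier machinery, but it delivers uniformity in $m$ essentially for free from the compactness of the geometric limit.

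Your combinatorial route is plausible, but as written it has a genuine gap at the final step. Behrstock at pivots $j_-<j_0<j_+$ gives $d_\gamma(\gamma_{j_+}(m),\mu^+)\leq 3$ and $d_\gamma(\gamma_{j_-}(m),\mu^-)\leq 3$; the triangle inequality then yields
\[
d_\gamma(\mu^+,\mu^-)\;\leq\;6+d_\gamma\bigl(\gamma_{j_-}(m),\gamma_{j_+}(m)\bigr),
\]
and you have not bounded the last term. That is a projection distance to the \emph{arbitrary} annulus $\gamma$ of two curves that themselves depend on $m$, so there is no evident $m$-independent bound. The ``principal technical obstacle'' you name (verifying $d_{\gamma_j(m)}(\gamma,\mu^\pm)\geq 10$) can indeed be handled: take $|j-j_0|$ large enough that every curve within the stability constant of $\gamma_{j_0}(m)$ lies at $\C(S)$-distance $\geq 2$ from $\gamma_j(m)$, making $\pi_{\gamma_j(m)}$ coarsely Lipschitz along a geodesic from $\gamma$ to $\gamma_{j_0}(m)$; this pins $\pi_{\gamma_j(m)}(\gamma)$ near $\pi_{\gamma_j(m)}(\mu^\mp)$ and hence far from $\pi_{\gamma_j(m)}(\mu^\pm)$. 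But this does not close the gap above. A fully combinatorial proof along your lines can likely be assembled using the BBKL-style ordering of active annuli along the axis, but the proposal as written does not get there, and the paper sidesteps the issue entirely by going through $3$--manifold geometry.
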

\begin{proof}  Let $\mu^\pm(m)$ denote the stable/unstable foliations of $f_m$.  Since $|k_j(m)| = m \geq K$,  $\{\gamma_j(m)\}_{j \in \mathbb Z}$ satisfies the conclusion of the Lemma~\ref{L:curves with big twist}.  Combining this with Proposition~\ref{P:flat annuli iff twisting} we have
\[ M(\gamma_j(m),Q(m)) \geq \frac{d_{\gamma_j(m)}(\mu^+(m),\mu^-(m))}{2} - 3 \geq \frac{m-R-6}2.\]
This proves the first statement.

Let $X_{f_m}$ denote the mapping torus of $f_m$ equipped with its hyperbolic metric, and $\widetilde X_{f_m}$ the cover of $X_{f_m}$ corresponding to the fiber subgroup $\pi_1(S)$.  Appealing to
the {\em Short Curve Theorem} of  Minsky
 \cite{MinskyShort2} (see also the {\em Length Bound Theorem} from Brock-Canary-Minsky's \cite{BCM-ELC2}), the curves $\gamma_j(m)$ all have length in $\widetilde X_{f_m}$ tending to zero as $m$ tends to infinity.  Being $f_m$--invariant, they push down to $n$ closed geodesics in $X_{f_m}$.

The geometric limit of the sequence of hyperbolic $3$--manifolds $X_{f_m}$ is the cusped hyperbolic $3$--manifold obtained by drilling out the $n$ curves, realized on $n$ different fibers of $X_{f_m}$ (see \cite{thurstonnotes}) and $X_{f_m}$ is obtained from $X_\infty$ by $(1,k_j(m))$--Dehn filling on $X_\infty$ for all $m > 0$ as in \cite{LoMo}.
The geometric convergence ensures that there is a uniform lower bound to the length of any curve in $X_{f_m}$ which is not one of the $n$ curves, and hence there is a uniform lower bound (independent of $m$) to the length of any curve $\gamma$ in $\widetilde X_{f_m}$ which is not in $\{\gamma_j(m)\}_{j \in \mathbb Z}$.  By the Short Curve Theorem again, it follows that $d_{\gamma}(\mu^+(m),\mu^-(m))$ is uniformly bounded, independent of $m$ and $\gamma$.  By Propostion~\ref{P:flat annuli iff twisting}, the modulus $M_t(\gamma)$ of any $q_t(m)$--Euclidean cylinder with core curve isotopic to $\gamma$ is uniformly bounded, independent of $m$ and $\gamma$, as required.
\end{proof}

The following provides a useful mechanism for deciding when a pseudo-Anosov $f \colon S \to S$ constructed as above is not a virtual lift.

\begin{theorem} \label{T:lifting curves} Suppose $\{f_m \colon S \to S\}_m$ is a sequence of pseudo-Anosov homeomorphisms as in Equation (\ref{E:type of pA}), $\{\{\gamma_j(m)\}_{j \in \mathbb Z}\}_m$ are the associated sequences of curves, and that the stretch factors $\lambda(f_m)$ have degree greater than $2$ over $\mathbb Q$.  Then there exists a positive integer $N \geq K$, so that if $m \geq N$ and $f_m$ is a virtual lift of some $\phi_m \colon \Sigma_m \to \Sigma_m$ via a branched covering $p_m \colon S \to \Sigma_m$, then there are representatives of the curves $\gamma_j(m)$ so that $p_m^{-1}(p_m(\gamma_j(m))) = \gamma_j(m)$ for all $j$.
\end{theorem}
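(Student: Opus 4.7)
The strategy is to exploit the large-modulus/bounded-modulus dichotomy of Proposition~\ref{P:modulus bounds for a sequence}: each $\gamma_j(m)$ sits inside a $Q(m)$-flat cylinder whose modulus grows linearly in $m$, while every other cylinder curve has modulus at most a fixed constant $D$. We push $Y_{\gamma_j(m)}$ down via $p_m$ to obtain a large-modulus cylinder in $\Sigma_m$, then pull back a sub-cylinder, and use the dichotomy to force every preimage component to be isotopic to some $\gamma_i(m)$; a disjointness argument then collapses the possible $i$'s to $j$. As a preliminary, we secure a uniform bound $\deg(p_m) \leq d_0 = d_0(S)$: pseudo-Anosov homeomorphisms of the torus and of the four-times-marked sphere have quadratic-irrational stretch factors, so since $\lambda(f_m)^{n_m} = \lambda(\phi_m)$ for the power $n_m$ by which $f_m^{n_m}$ is a lift, a short field-theoretic argument combined with the hypothesis $\deg_{\mathbb Q}\lambda(f_m) > 2$ rules out these base surfaces. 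Riemann-Hurwitz then caps $\deg(p_m)$, making the constant $B = B(S,d_0)$ from Lemma~\ref{L:preimage of cylinders} uniform in $m$.

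Fix $j$. By Proposition~\ref{P:modulus bounds for a sequence} we have $M(Y_{\gamma_j(m)}, Q(m)) \geq (m-R-6)/2$. Applying Lemma~\ref{L:cylinders into orientable} (if $\Sigma_m$ is orientable) or Lemma~\ref{L:cylinders into nonorientable} (otherwise; its modulus hypothesis holds once $m$ is large) to $p_m|_{Y_{\gamma_j(m)}}$, the image is either a Euclidean cylinder or a half-pillowcase in $\Sigma_m$ of modulus at least $(m-R-6)/4$. In either case this determines a $\Xi(m)$-flat cylinder $W \subset \Sigma_m$ of this modulus whose closure contains a representative of the isotopy class of $p_m(\gamma_j(m))$. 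Lemma~\ref{L:preimage of cylinders} then yields a sub-cylinder $W_0 \subset W$ whose preimage $p_m^{-1}(W_0)$ is a disjoint union of Euclidean cylinders in $S$, each of modulus at least $B(m-R-6)/4$. Choosing $N$ so that $B(N-R-6)/4 > D$, for $m \geq N$ the second half of Proposition~\ref{P:modulus bounds for a sequence} forces each preimage cylinder's core to be isotopic to some $\gamma_i(m)$.

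To finish, take the core of $W_0$ as a simple closed curve representative $\alpha$ of the isotopy class of $p_m(\gamma_j(m))$. Since $W_0$ avoids the branch values of $p_m$, the preimage $p_m^{-1}(\alpha)$ is a disjoint union of simple closed curves in $S$---precisely the cores of the preimage cylinders---and each is isotopic to some $\gamma_{i(k)}(m)$; the curve $\gamma_j(m)$ itself appears among them as the core of the sub-cylinder of $Y_{\gamma_j(m)}$ lying over $W_0$. But distinct preimage components are pairwise disjoint, whereas Lemma~\ref{L:curves with big twist}(i) asserts $i(\gamma_i(m),\gamma_{i'}(m)) \neq 0$ whenever $i \neq i'$. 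Every preimage component must therefore lie in the single isotopy class of $\gamma_j(m)$, and we obtain $p_m^{-1}(p_m(\gamma_j(m))) = \gamma_j(m)$ after choosing the representative of $\gamma_j(m)$ to be the full preimage multicurve of $\alpha$.

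The principal subtlety will be the half-pillowcase case, in which $p_m(\gamma_j(m))$ is a doubled geodesic segment between two cone points of angle $\pi$ rather than an embedded simple closed curve; there one must pick $\alpha$ as a simple closed curve in the annular complement of the core segment of the half-pillowcase and verify via orbifold-covering considerations that every component of $p_m^{-1}(\alpha)$ still lies in the isotopy class of $\gamma_j(m)$. A secondary technical point is the uniform degree bound, which depends on carefully invoking the degree hypothesis on $\lambda(f_m)$ to exclude the low-complexity base surfaces.
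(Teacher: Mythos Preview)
Your approach is essentially the paper's: rule out the torus and four-times-marked sphere as base (so Riemann--Hurwitz and Poincar\'e--Hopf give a uniform bound $d$ on $\deg p_m$), push $Y_{\gamma_j(m)}$ down via Lemmas~\ref{L:cylinders into orientable}/\ref{L:cylinders into nonorientable}, pull a sub-cylinder back via Lemma~\ref{L:preimage of cylinders}, and use the dichotomy of Proposition~\ref{P:modulus bounds for a sequence} to force every preimage component to be isotopic to some $\gamma_{j'}(m)$. Your disjointness argument (preimage components of a simple closed curve are disjoint, while distinct $\gamma_i(m)$'s intersect) is a clean variant of the paper's ``same direction in the flat metric'' argument; both reach the same conclusion that $j'=j$.

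There is one genuine gap at the very end. Your sentence ``choosing the representative of $\gamma_j(m)$ to be the full preimage multicurve of $\alpha$'' does not yet produce a \emph{single} simple closed curve with saturated preimage: a priori $p_m^{-1}(\alpha)$ could consist of several parallel copies of $\gamma_j(m)$ inside $Y_{\gamma_j(m)}$, and then no single one of them satisfies $p_m^{-1}(p_m(\gamma_j(m)))=\gamma_j(m)$. The paper closes this by applying Lemma~\ref{L:cylinders into orientable} once more, now to the \emph{maximal} cylinder $Y_{\gamma_j(m)}$. Since every preimage component is isotopic to $\gamma_j(m)$, all of $p_m^{-1}(W_0)$ lies in $Y_{\gamma_j(m)}$; hence $p_m$ restricted to $Y_{\gamma_j(m)}$ is either a genuine covering onto a cylinder---in which case any core curve downstairs has \emph{connected} preimage, giving the desired single representative---or the image is a half-pillowcase, in which case one takes for $\gamma_j(m)$ the unique core geodesic through the branch points, whose image is the core segment. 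This is exactly the ``principal subtlety'' you flag for the half-pillowcase case, but note that the cylinder case requires the same additional sentence.
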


\begin{proof}   Suppose that $p_m \colon S \to \Sigma_m$ is a branched covering and $\phi_m$ a map that lifts to a power of $f_m$.  Since $\lambda(f_m)$ is not quadratic irrational, $\Sigma_m$ is not a sphere with four marked points or a torus.  Let $\Xi(m) = \{\xi_t(m)\}$ and $Q(m) = \{q_t(m)\}$ be the associated compatible family of flat metrics.  By Lemmas~\ref{L:cylinders into orientable} and \ref{L:cylinders into nonorientable}, for each $j$, we can choose a representative of $\gamma_j(m)$ so that $p_m(\gamma_j(m))$ is a cylinder curve with
\[ M(p_m(\gamma_j(m)),\Xi(m)) \geq \frac{M(\gamma_j(m),Q(m))}2.\]
On the other hand, by the Riemann-Hurwitz Theorem and Poincar\'e Hopf Theorem, there is a bound $d$ on the degree of $p_m$.
Let $B = B(S,d)$ be the constant from Lemma~\ref{L:preimage of cylinders}.  Then
there is a sub-cylinder $Y_j(m)$ of the cylinder about $p_m(\gamma_j(m))$ so that each component of $\widetilde Y_j(m) = p_m^{-1}(Y_j(m))$ is a Euclidean cylinder and has maximal modulus at least
\[ B M(p_m(\gamma_j(m)),\Xi(m)) \geq \frac{BM(\gamma_j(m),Q(m))}2.\]
One component of $\widetilde Y_j(m)$ is contained in the original cylinder with core curve $\gamma_j(m)$.  Without loss of generality, we may choose $\gamma_j(m)$ so that $p_m(\gamma_j(m)) \subset Y_j(m)$, and hence $\gamma_j(m) \subset p_m^{-1}(p_m(\gamma_j(m))) \subset \widetilde Y_j(m)$.

Let $D  > 0$ be the constant from Proposition~\ref{P:modulus bounds for a sequence}.  We choose $N> K$ so that if $m \geq N$, then for all $j$
\[ \frac{B(m-R-6)}4 > D.\]
Then if $\gamma_j(m)'$ is any component of $p_m^{-1}(p_m(\gamma_j(m)))$, and $\widetilde Y'_j(m) \subset \widetilde Y_j(m)$ is the component containing it, then the bound above on the maximal modulus of $\widetilde Y'_j(m)$ combined with Proposition~\ref{P:modulus bounds for a sequence} implies
\[ M(\gamma_j(m)',Q(m)) \geq \frac{BM(\gamma_j(m),Q(m))}2 \geq \frac{B(m-R-6)}4 > D.\]
Consequently, $\gamma_j(m)'$ must be one of the curves $\gamma_{j'}(m)$.  However, the direction of $\gamma_j(m)$ and $\gamma_j(m)'$ in the Euclidean cone metric are the same, while if $j' \neq j$, the curves $\gamma_{j'}(m)$ and $\gamma_j(m)$ intersect nontrivially by Lemma~\ref{L:curves with big twist}.  Therefore, $\gamma_j(m)'$ and $\gamma_j(m)$ must either be equal or isotopic.

Thus, all the components of $p_m^{-1}(p_m(\gamma_j(m)))$ are isotopic to $\gamma_j(m)$, and are hence contained in a single cylinder.  By Lemma~\ref{L:cylinders into orientable}, either $p_m$ restricted to this cylinder is a covering map---in which case, $p_m^{-1}(p_m(\gamma_j(m))) = \gamma_j(m)$, and we are done---or the image of the cylinder is a half-pillowcase.  If the latter happens, then we take $\gamma_j(m)$ to be the unique core curve in the cylinder that projects to the core geodesic segment of the half-pillowcase, we get $p_m^{-1}(p_m(\gamma_j(m))) = \gamma_j(m)$, as required.
\end{proof}

\begin{corollary} \label{C:at worst hyperellipticish} In addition to the assumptions from Theorem~\ref{T:lifting curves}, suppose that $i(c_i,c_{i+1}) = 1$ for some $i$.   If $m \geq N$, and $f_m$ is a virtual lift of some $\phi_m \colon \Sigma_m \to \Sigma_m$ via a branched covering $p_m \colon S \to \Sigma_m$, then either $p_m$ is a homeomorphism, or $\Sigma_m$ is the quotient by an orientation preserving involution preserving the isotopy classes of $c_1,\ldots,c_m$.
\end{corollary}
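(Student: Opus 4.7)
The plan is to combine Theorem~\ref{T:lifting curves} with the hypothesis $i(c_i,c_{i+1})=1$ to force $\deg(p_m) \in \{1,2\}$, and then to identify the deck involution as the desired orientation-preserving involution.

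First, apply Theorem~\ref{T:lifting curves} to obtain representatives of $\gamma_j(m) = f_j(c_j)$ with $p_m^{-1}(p_m(\gamma_j(m))) = \gamma_j(m)$ for all $j$. Since $T_{c_{i+1}}^{k_{i+1}}(c_{i+1}) = c_{i+1}$, one has $\gamma_{i+1}(m) = f_i(c_{i+1})$, and hence
\[ i(\gamma_i(m),\gamma_{i+1}(m)) = i(f_i(c_i),f_i(c_{i+1})) = i(c_i,c_{i+1}) = 1.\]
Let $x$ be the unique intersection point. Then
\[ p_m^{-1}(p_m(x)) \subseteq p_m^{-1}(p_m(\gamma_i(m))) \cap p_m^{-1}(p_m(\gamma_{i+1}(m))) = \gamma_i(m) \cap \gamma_{i+1}(m) = \{x\},\]
so the local degree of $p_m$ at $x$ equals $d := \deg(p_m)$.

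Next, I analyze the flat cylinder $Y_{\gamma_i(m)}$. From Lemmas~\ref{L:cylinders into orientable} and \ref{L:cylinders into nonorientable} (as used inside Theorem~\ref{T:lifting curves}), $p_m(Y_{\gamma_i(m)})$ is either a Euclidean cylinder, in which case $p_m|_{Y_{\gamma_i(m)}}$ is an unramified covering, or a half-pillowcase, in which case $p_m|_{Y_{\gamma_i(m)}}$ is a $2$-to-$1$ cover branched at two points of the core $\gamma_i(m)$. In the first case, every fiber of $p_m|_{Y_{\gamma_i(m)}}$ has the same cardinality and the fiber over $p_m(x)$ consists only of $\{x\}$, so the cover has degree $1$, whence $d = 1$ and $p_m$ is a homeomorphism. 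In the second case, $x$ cannot be a regular point of $p_m|_{Y_{\gamma_i(m)}}$ (otherwise its fiber would contain a second preimage of $p_m(x)$), so $x$ is one of the two branch points, the local degree at $x$ is $2$, and therefore $d = 2$.

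In the case $d = 2$, let $\sigma \colon S \to S$ be the deck involution of the branched cover $p_m$. In the local model $p_m \colon z \mapsto z^2$ about $x$, $\sigma$ acts as $z \mapsto -z$, which is orientation preserving with $x$ as an isolated fixed point. Since the fixed point set of an orientation-reversing involution of an orientable surface is a disjoint union of circles, $\sigma$ must be orientation preserving globally, and hence $\Sigma_m = S/\sigma$ is orientable. The identity $p_m^{-1}(p_m(\gamma_j(m))) = \gamma_j(m)$ gives $\sigma(\gamma_j(m)) = \gamma_j(m)$ as isotopy classes for every $j$.

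Finally, I show by induction on $j \in \{1,\ldots,n\}$ that $\sigma(c_j) = c_j$ as isotopy classes. The base case is immediate because $\gamma_1(m) = T_{c_1}^{k_1}(c_1) = c_1$. For the inductive step, assume $\sigma(c_i) = c_i$ for all $i < j$. Since $\sigma$ is orientation preserving, $\sigma T_{c_i}^{k_i}\sigma^{-1} = T_{c_i}^{k_i}$ for each such $i$, so $\sigma$ commutes with $f_{j-1}$ up to isotopy. Using $\gamma_j(m) = f_{j-1}(c_j)$, we obtain
\[ f_{j-1}(c_j) = \gamma_j(m) = \sigma(\gamma_j(m)) = f_{j-1}(\sigma(c_j)),\]
whence $\sigma(c_j) = c_j$. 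The main obstacle in this argument is the local branching analysis forcing $d \in \{1,2\}$; the orientation verification and the induction are routine once that is in place.
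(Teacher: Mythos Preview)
Your proof is correct and follows the same route as the paper's: exploit the single intersection point $x$ and Theorem~\ref{T:lifting curves} to force $\deg(p_m)\in\{1,2\}$ via the cylinder/half-pillowcase dichotomy, then run the identical induction to show the deck involution fixes each $c_j$. One minor caveat: in the half-pillowcase case $p_m|_{Y_{\gamma_i(m)}}$ need not literally be a $2$-to-$1$ branched cover of its image (see the Remark after Lemma~\ref{L:cylinders into orientable}), but your argument only uses the $2$-to-$1$ behavior on the chosen core curve $\gamma_i(m)$ itself, which is exactly what Theorem~\ref{T:lifting curves} arranges, so nothing is affected; you also supply the isolated-fixed-point justification for orientation preservation that the paper leaves implicit.
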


\begin{proof} Choose representatives $\gamma_j(m)$ for the isotopy classes, for all $j$, as in the theorem.  Note that $i(\gamma_i(m),\gamma_{i+1}(m)) = i(c_i,c_{i+1}) = 1$.  Since $p_m^{-1}(p_m(\gamma_j(m))) = \gamma_j(m)$ for all $j$, it follows that if $x = \gamma_i(m) \cap \gamma_{i+1}(m)$, then $p_m^{-1}(p_m(x)) = \{x\}$.  Since the image of the cylinders about $\gamma_j(m)$ are either cylinders or half-pillowcases, the local degree of $p_m$ near $x$ must be $1$ or $2$.  Thus either $p_m$ is a homeomorphism or else it has degree $2$.  In the latter case, the branched covering is regular and the covering group is generated by an orientation preserving involution $\tau$.

Suppose now that $p_m$ has degree $2$.  Since $p_m^{-1}(p_m(\gamma_j(m))) = \gamma_j(m)$ for all $j$, it follows that $\tau(\gamma_j(m)) = \gamma_j(m)$.  We now show that $\tau(c_j) = c_j$ for each $j = 1,\ldots, n$.  For $j = 1$, note that $c_1= \gamma_1(m)$.  We use this as the base case for induction.  Assuming $\tau(c_j)  = c_j$ for all $1 \leq j \leq \ell < n$, we prove that $\tau(c_{\ell+1}) = c_{\ell+1}$.  For this, observe that
\[ \gamma_{\ell}(m) = T_{c_1}^{k_1(m)} \cdots T_{c_\ell}^{k_\ell(m)} T_{c_{\ell+1}}^{k_{\ell+1}(m)}(c_{\ell + 1}) = T_{c_1}^{k_1(m)} \cdots T_{c_\ell}^{k_\ell(m)}(c_{\ell + 1}) \]
since $T_{c_{\ell+1}}$ fixes $c_{\ell+1}$.  Therefore, we have
\[ T_{c_\ell}^{-k_\ell(m)} \cdots T_{c_1}^{-k_1(m)} (\gamma_\ell(m) ) = c_{\ell+1}.\]
Since $\tau$ preserves each of $c_1,\ldots,c_\ell$, it commutes with $T=T_{c_\ell}^{-k_\ell(m)} \cdots T_{c_1}^{-k_1(m)}$, thus the equations $T(\gamma_\ell(m)) = c_{\ell+1}$ and $\tau(\gamma_\ell(m)) = \gamma_\ell(m)$ imply
\[ \tau(c_{\ell+1}) = \tau T(\gamma_\ell(m))  = \tau T \tau^{-1} \tau(\gamma_\ell(m)) = T(\gamma_\ell(m)) = c_{\ell+1}.\]
This completes the proof.
\end{proof}

\subsection{Strenner's construction}

The key to obtaining the required degree for the dilatation is the following special case of a result of Strenner \cite[Theorem~5.3]{Strenner}, building on a theorem of Penner \cite{PennerCon}.

\begin{theorem}[Strenner] \label{T:Strenner}  Suppose $A = a_1 \cup \ldots \cup a_n$ and $B = b_1 \cup \cdots \cup b_n$ are multicurves that fill the surface $S$, and let $N = (i(a_i,b_j))_{ij}$ be the matrix of intersection numbers and $G$ the associated (bipartite) adjacency graph (with a vertex for every $a_i$ and every $b_j$ and an edge between $a_i$ and $b_j$ if $i(a_i,b_j) \neq 0$).  Suppose
\begin{enumerate}
\item $rk(N)  = r > 0$,
\item $a_{i_1} b_{i_1} a_{i_2} b_{i_2} \cdots a_{i_d} b_{i_d} a_{i_1}$ are the vertices of a closed, contractible loop in $G$ visiting every vertex, and
\end{enumerate}
Then for all $m > 0$ sufficiently large, the mapping classes
\[ f_m = T_{a_{i_1}}^mT_{b_{i_1}}^{-m} \cdots T_{a_{i_d}}^m T_{b_{i_d}}^{-m} \]
are pseudo-Anosov and $\lambda(f_m)$ has degree $2r$.
\end{theorem}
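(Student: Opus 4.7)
The plan is to combine Penner's pseudo-Anosov construction with a direct eigenvalue computation on the weight space of an invariant train track; this is in essence the content of Strenner's Theorem 5.3. The pseudo-Anosov property itself is immediate from Penner's original construction: since $A \cup B$ fills $S$ by hypothesis and, by assumption (2), each $a_i$ and each $b_j$ appears in the defining word of $f_m$ with positive twists on the $a$-curves and negative twists on the $b$-curves, Penner's theorem guarantees that $f_m$ is pseudo-Anosov for every $m \geq 1$. Only the claim on the degree of the stretch factor requires $m$ to be large.

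Next I would compute $\lambda(f_m)$ as the Perron-Frobenius eigenvalue of the linear action of $f_m$ on the weight space of an invariant train track $\tau$ carrying $A \cup B$ in standard Penner form. In coordinates indexed by $a_1,\dots,a_n,b_1,\dots,b_n$, the twist $T_{a_i}^m$ acts by an elementary matrix $I + m R_{a_i}$, where $R_{a_i}$ is a rank-one matrix encoding the row $i(a_i,\cdot)$ of the intersection matrix $N$; symmetrically, $T_{b_j}^{-m}$ acts by $I + m R_{b_j}$ encoding the column $i(\cdot, b_j)$. The matrix $M_m$ representing $f_m$ is the product of these $2d$ elementary matrices, with entries polynomial in $m$ built from $N$. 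The image of $M_m - I$ lies in the span of the nonzero rows and columns of $N$, a subspace of dimension at most $2r$ by (1), so the characteristic polynomial factors as $p_m(\lambda) = (\lambda - 1)^{2n - 2r}\, q_m(\lambda)$ with $\deg q_m = 2r$, and $\lambda(f_m)$ is a root of $q_m$.

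The main obstacle is to upgrade this to the claim that $\lambda(f_m)$ has degree \emph{exactly} $2r$ over $\mathbb Q$, which amounts to showing that $q_m(\lambda)$ is irreducible over $\mathbb Q$ for all sufficiently large $m$. The approach here would be to view $q_m(\lambda) \in \mathbb Z[m,\lambda]$ as a two-variable polynomial, exploit the palindromic symmetry $q_m(\lambda) = \lambda^{2r} q_m(1/\lambda)$ coming from the fact that both $\lambda(f_m)$ and its inverse are roots, and combine a Newton polygon analysis of the leading term in $m$ with a Hilbert-irreducibility style argument to exclude all nontrivial factorizations once $m$ is large enough. This irreducibility step is the technical heart of Strenner's argument; the Penner matrix model and rank reduction outlined above are essentially bookkeeping.
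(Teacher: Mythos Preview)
The paper does not prove this theorem at all: it is quoted as a special case of \cite[Theorem~5.3]{Strenner} (building on Penner \cite{PennerCon}) and used as a black box in Section~5. So there is no proof in the paper to compare your sketch against.

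Treating your write-up as a sketch of Strenner's argument, the architecture is right (Penner's construction for the pseudo-Anosov property, a transition matrix on the train-track weight space, a reduction to a degree-$2r$ factor, then irreducibility), but the rank-reduction step is argued incorrectly. With the standard Penner action, $R_{a_i}$ has image $\operatorname{span}\{e_{a_i}\}$ and $R_{b_j}$ has image $\operatorname{span}\{e_{b_j}\}$, so the images of all the $R$'s together span the full $2n$-dimensional weight space; the image of $M_m-I$ is \emph{not} confined to a $2r$-dimensional subspace. The correct observation is dual: the common kernel $\bigcap_i \ker R_{a_i}\cap\bigcap_j\ker R_{b_j}=\{(x,y):N^{T}x=0,\ Ny=0\}$ has dimension $2(n-r)$ and $M_m$ acts as the identity there, so the characteristic polynomial splits off $(\lambda-1)^{2n-2r}$ and the induced action on the quotient gives the degree-$2r$ factor $q_m$. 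This is easily repaired.

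The genuine gap is precisely where you flag it: you do not prove irreducibility of $q_m$, you only gesture at a strategy. Note also that you never use the contractibility hypothesis in (2) beyond ensuring every curve appears; in Strenner's argument that hypothesis does real work in controlling the algebraic structure of $q_m$, and a bare Newton-polygon plus Hilbert-irreducibility outline does not obviously capture it. As written, this is a plausible plan rather than a proof.
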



\section{Proof of the main theorem}

We will apply the results of the preceding section to a pair of multicurves, depending on the even integer $r > 2$, for every sufficiently large genus surface.  For this, we start with a particular pair of simple closed curves $a,b$ that fill a genus $3$ surface $X$ with one boundary component and intersect in exactly $5$ points with the same sign (after orienting them appropriately).    This pair is described in Figure~\ref{F:genus 3 curves}.

\begin{figure}[h]
\begin{center}
\begin{tikzpicture}[scale = 1]
\draw[->] (0,0) --  (.5,0);
\draw[->] (.5,0) --  (2,0);
\draw[->] (2,0) --  (4,0);
\draw[->] (4,0) --  (6,0);
\draw[->] (6,0) --  (8,0);
\draw[->] (8,0) --  (9.5,0);
\draw (9.5,0) --  (10,0);
\draw[->] (1,-1) -- (1,.5);
\draw[->] (3,-1) -- (3,.5);
\draw[->] (5,-1) -- (5,.5);
\draw[->] (7,-1) -- (7,.5);
\draw[->] (9,-1) -- (9,.5);
\draw (1,.5) -- (1,1);
\draw (3,.5) -- (3,1);
\draw (5,.5) -- (5,1);
\draw (7,.5) -- (7,1);
\draw (9,.5) -- (9,1);
\node at (1,1.3) {$\beta_2$};
\node at (3,1.3) {$\beta_4$};
\node at (5,1.3) {$\beta_1$};
\node at (7,1.3) {$\beta_5$};
\node at (9,1.3) {$\beta_3$};
\node at (1,-1.3) {$\beta_1$};
\node at (3,-1.3) {$\beta_2$};
\node at (5,-1.3) {$\beta_3$};
\node at (7,-1.3) {$\beta_4$};
\node at (9,-1.3) {$\beta_5$};
\node at (.5,-.3) {$\alpha_1$};
\node at (2,-.3) {$\alpha_2$};
\node at (4,-.3) {$\alpha_3$};
\node at (6,-.3) {$\alpha_4$};
\node at (8,-.3) {$\alpha_5$};
\node at (9.5,-.3) {$\alpha_1$};
\node at (5.2,.2) {$x$};
\draw[fill=black] (5,0) circle (.05cm);
\draw plot [smooth] coordinates {(0,.55)(.4,.6)(.5,1)};
\draw plot [smooth] coordinates {(0,-.55)(.4,-.6)(.5,-1)};
\draw plot [smooth] coordinates {(1.5,1)(1.6,.6)(2.4,.6)(2.5,1)};
\draw plot [smooth] coordinates {(3.5,1)(3.6,.6)(4.4,.6)(4.5,1)};
\draw plot [smooth] coordinates {(5.5,1)(5.6,.6)(6.4,.6)(6.5,1)};
\draw plot [smooth] coordinates {(7.5,1)(7.6,.6)(8.4,.6)(8.5,1)};
\draw plot [smooth] coordinates {(1.5,-1)(1.6,-.6)(2.4,-.6)(2.5,-1)};
\draw plot [smooth] coordinates {(3.5,-1)(3.6,-.6)(4.4,-.6)(4.5,-1)};
\draw plot [smooth] coordinates {(5.5,-1)(5.6,-.6)(6.4,-.6)(6.5,-1)};
\draw plot [smooth] coordinates {(7.5,-1)(7.6,-.6)(8.4,-.6)(8.5,-1)};
\draw plot [smooth] coordinates {(10,.55)(9.6,.6)(9.5,1)};
\draw plot [smooth] coordinates {(10,-.55)(9.6,-.6)(9.5,-1)};
\draw (0,.55) -- (0,-.55);
\draw (.5,1) -- (1.5,1);
\draw (2.5,1) -- (3.5,1);
\draw[line width=2] (4.5,1) -- (5.5,1);
\draw (6.5,1) -- (7.5,1);
\draw (8.5,1) -- (9.5,1);
\draw (10,.55) -- (10,-.55);
\draw[line width=2] (.5,-1) -- (1.5,-1);
\draw (2.5,-1) -- (3.5,-1);
\draw (4.5,-1) -- (5.5,-1);
\draw (6.5,-1) -- (7.5,-1);
\draw (8.5,-1) -- (9.5,-1);
\end{tikzpicture}
\caption{The curves $a$ and $b$ are cut into arcs $a = \alpha_1 \cup \cdots \cup \alpha_5$ and $b = \beta_1 \cup \cdots \cup \beta_5$ at the points of intersection $a \cap b$.  The surface $X$ of genus $3$ with one boundary component is shown, cut open along essential arcs meeting each of the arcs $\beta_1,\ldots,\beta_5$ and $\alpha_1$ as labelled.  The point $x$ is the fixed point of an involution $\rho$ of $X$ leaving each of $a$ and $b$ invariant.  The thick line represents an essential arc $\delta$ meeting $b$ in the arc $\beta_1$.}
\label{F:genus 3 curves}
\end{center}
\end{figure}
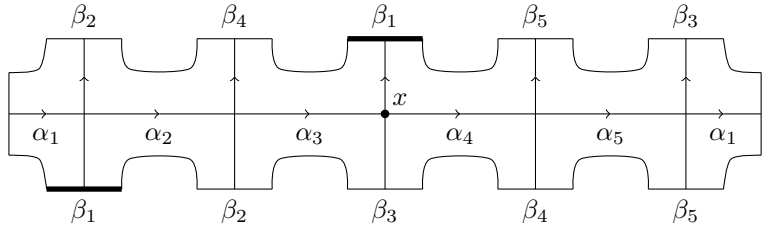

\begin{lemma} \label{L:special genus 3 involution}
Up to isotopy, the surface $X$ admits exactly one orientation preserving involution $\rho$ leaving both $a$ and $b$ invariant.
\end{lemma}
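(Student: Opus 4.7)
The plan is to first reduce the statement to a combinatorial question about the graph $G = a\cup b$, and then resolve that question by inspection of Figure 4.1. Since $\rho$ is an orientation preserving involution of the oriented surface $X$, its fixed set consists of isolated points. The restrictions $\rho|_a$ and $\rho|_b$ are involutions of $S^1$ preserving the $5$--element set $a\cap b$. Neither restriction can be the identity, else an entire curve would lie in the fixed set of $\rho$, contradicting isolation. Neither can be the orientation preserving free involution of $S^1$ either, since that involution acts freely on any finite set, and in particular cannot preserve a set of odd cardinality. Hence each of $\rho|_a$ and $\rho|_b$ is a reflection, with exactly two fixed points on the respective circle.

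A reflection of $S^1$ pairs the points that it does not fix, so the number of elements of $a\cap b$ fixed by $\rho|_a$ is odd and at most $2$; it must therefore equal $1$. The same holds for $\rho|_b$, and because the fixed set of $\rho$ is a subset of both $a$ and $b$, the unique fixed intersection points coming from the two restrictions coincide in a common point $x_0 \in a\cap b$. Once $x_0$ is specified, $\rho|_a$ pairs the remaining four intersection points according to the cyclic order on $a$, while $\rho|_b$ pairs them according to the cyclic order on $b$, and these two pairings must agree.

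Next, I would enumerate the five possible choices for $x_0$, read off the two cyclic orderings of $a\cap b$ along $a$ and along $b$ from Figure 4.1, and check in each case whether the reflection pairings coming from $a$ and from $b$ define the same involution of $a\cap b$. This finite combinatorial verification, which picks out the point labeled $x$ in the figure as the unique admissible choice of $x_0$, is the main obstacle: the two cyclic orderings are sufficiently different that the constraint is genuinely restrictive, but the check is mechanical once the orderings are extracted from the figure.

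With $x_0 = x$ fixed, the permutation induced by $\rho$ on $a\cap b$ is determined, the remaining two fixed points of $\rho$ must be the midpoints of the arcs of $a\setminus(a\cap b)$ and $b\setminus(a\cap b)$ opposite $x$, and the action of $\rho$ on the edges of $G$ is forced by the embedding of $G$ in $X$. Since $a\cup b$ fills $X$, the surface $X$ deformation retracts onto the spine $G$, and a standard Alexander method argument implies that any orientation preserving homeomorphism of $X$ preserving $a$ and $b$ is determined up to isotopy by its action on $G$. Consequently any two such involutions are isotopic on $X$, which completes the proof of uniqueness.
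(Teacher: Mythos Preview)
Your approach is essentially the paper's: both reduce to analyzing automorphisms of the $4$--valent graph $a\cup b$ that respect the cyclic order at vertices, observing that any nontrivial such involution fixes a vertex and is determined by that choice, and then checking the five candidates. Your parity argument (five intersection points, a circle reflection has two fixed points) is a nice explicit justification for why exactly one vertex of $a\cap b$ is fixed, where the paper simply asserts this.

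Two small points to fix. First, the clause ``the fixed set of $\rho$ is a subset of both $a$ and $b$'' is false as written---$\rho$ can have fixed points in the complementary disk. What you actually need is that any $\rho$--fixed point lying in $a\cap b$ is in particular a fixed point of $\rho|_a$ lying in $a\cap b$, and you have already shown there is exactly one such; hence the fixed intersection points coming from the $a$-- and $b$--restrictions coincide. Second, you establish only uniqueness, while the lemma claims ``exactly one''. The paper handles existence by exhibiting the evident rotation by $\pi$ about the point $x$ in the figure and noting that it extends over the edge identifications; you should include this.
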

\begin{proof}
Let $\rho \colon X \to X$ denote the ``obvious'' involution of $X$, evident in Figure~\ref{F:genus 3 curves}, given by rotation about the point $x$---it is straightforward to check that the rotation extends over the gluing of the arcs in the reconstruction of $X$.  To see that $\rho$ is the only orientation preserving involution preserving $a$ and $b$, we note that such an involution would define a graph automorphism of $a \cup b$, viewed as a four-valent graph with $5$ vertices, and would preserve the cyclic ordering around each vertex.  Any such nontrivial graph automorphism would necessarily fix one of the vertices, and would be determined by which vertex it fixed. It is now easy to show that the only such nontrivial graph automorphism is $\rho$.
\end{proof}

We now prove the following theorem, which implies the Main Theorem in the introduction.

\begin{theorem} For each $r > 1$ and closed orientable surface $S = S_g$ with $g \geq r+2$, there exists a pseudo-Anosov 
homeomorphism $f \colon S \to S$ with stretch factor $\lambda(f)$ of degree $2r$ and orientable foliations that is not a virtual lift.
\end{theorem}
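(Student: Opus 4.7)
The plan is to apply Strenner's Theorem~\ref{T:Strenner} to a carefully chosen pair of multicurves $(A, B)$ on $S_g$ built around the genus-$3$ configuration $(a,b)$ on $X$, then combine Corollary~\ref{C:at worst hyperellipticish} with the uniqueness of the involution $\rho$ from Lemma~\ref{L:special genus 3 involution} to rule out virtual lifting.

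First I would embed $X \subset S_g$ and set $a_1 = a$, $b_1 = b$. Since $g \geq r + 2$, I can attach at least $r-1$ one-handles to $X$ (and extra topology if $g > r+2$) to complete $S_g$. For each $i = 2, \ldots, r$, I place a pair $(a_i, b_i)$ with $i(a_i,b_i) = 1$ in the $(i-1)$-st new handle, and I route $a_2$ through the arc $\delta$ of Figure~\ref{F:genus 3 curves} so that $a_2$ becomes a simple closed curve in $S_g$ whose intersection with $X$ is precisely $\delta$. I would then arrange the remaining pairs so that $(A,B)$ fills $S_g$, the bipartite graph $G$ carries a Hamiltonian cycle $a_1 b_1 a_2 b_2 \cdots a_r b_r a_1$ (giving Strenner's hypothesis (2)), and the intersection matrix $N$ has rank exactly $r$. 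Strenner's Theorem~\ref{T:Strenner} then yields pseudo-Anosovs $f_m$ of the form (\ref{E:type of pA}) with $\lambda(f_m)$ of degree $2r$ and orientable foliations, the latter following from arranging all intersections with consistent signs. Moreover, for every $i \geq 2$, the consecutive curves $a_i, b_i$ in the ordering satisfy $i(a_i, b_i) = 1$, so the hypothesis of Corollary~\ref{C:at worst hyperellipticish} is activated.

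For $m$ at least the constant $N$ of Theorem~\ref{T:lifting curves}, suppose $f_m$ is a virtual lift via a branched cover $p_m \colon S_g \to \Sigma_m$. Corollary~\ref{C:at worst hyperellipticish} forces $p_m$ to be either a homeomorphism---impossible, since a virtual lift requires $\deg(p_m) > 1$---or the quotient by an orientation-preserving involution $\tau$ of $S_g$ preserving the isotopy class of every $a_i$ and $b_i$. After isotoping $\tau$ so that it preserves a regular neighborhood of $a \cup b$, which we identify with $X$, Lemma~\ref{L:special genus 3 involution} forces $\tau|_X = \rho$. The final and most delicate step, and the main obstacle I anticipate, is to show that no global involution can extend $\rho$ while preserving the isotopy class of $a_2$: I would verify, using the explicit combinatorics of Figure~\ref{F:genus 3 curves}, that $\rho(\delta)$ is not isotopic to $\delta$ rel $\partial X$, so that $\tau(a_2 \cap X)$ cannot be isotoped back to $a_2 \cap X$ within $X$. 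This contradicts $\tau(a_2) \simeq a_2$, so $f_m$ is not a virtual lift, establishing the theorem for $d = 2r$.
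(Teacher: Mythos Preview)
Your overall strategy matches the paper's, and the endgame---applying Corollary~\ref{C:at worst hyperellipticish}, identifying $\tau|_X$ with $\rho$ via Lemma~\ref{L:special genus 3 involution}, and deriving a contradiction from the fact that $\rho$ does not preserve the isotopy class of $\delta$---is exactly right. But there is a genuine gap in your invocation of Theorem~\ref{T:Strenner}: hypothesis~(2) requires the closed loop in the bipartite intersection graph $G$ to be \emph{contractible}. A Hamiltonian cycle $a_1 b_1 a_2 b_2 \cdots a_r b_r a_1$ is a simple cycle of length $2r$ in the $1$--complex $G$ and is never null-homotopic, so Strenner's theorem does not apply to the word you wrote down. (Relatedly, your configuration would need an edge $b_r\,a_1$, i.e.\ $i(a_1,b_r)\neq 0$, which you have not arranged and which would force $G$ to contain a cycle.)

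The paper repairs this by making $G$ a \emph{path} $a_1\!-\!b_1\!-\!a_2\!-\!b_2\!-\!\cdots\!-\!a_r\!-\!b_r$ (so $G$ is a tree), with $a_2$ meeting $b_1$ once via $\delta$, each $b_i$ for $2\le i<r$ meeting only $a_i$ and $a_{i+1}$ once, and $b_r$ meeting only $a_r$. One then uses the palindromic walk $a_1,b_1,a_2,b_2,\ldots,a_r,b_r,a_r,b_{r-1},\ldots,a_2,b_1$: it visits every vertex, closes up at $a_1$, and is contractible since $G$ is simply connected. The resulting triangular intersection matrix has rank $r$, consecutive curves in the (cyclic) twist word still intersect, and $i(b_1,a_2)=1$ activates Corollary~\ref{C:at worst hyperellipticish}. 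With this out-and-back word in place of your Hamiltonian cycle, the rest of your argument goes through unchanged.
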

\begin{proof} Embed the surface $X$ of genus $3$ with one boundary component as an essential subsurface of $S_g$.  The complement $Z$ is a surface of genus $g-3$ with one boundary component.  
Let $a_1 = a$ and $b_1 = b$ as constructed above.  The arc $\delta$ from Figure~\ref{F:genus 3 curves} can be connected to an arc $\delta'$ in $Z$ to construct an essential simple closed curve we denote $a_2$, that has intersection number $1$ with $b_1$ and $0$ with $a_1$.   

If $r =2$, then we choose any essential simple closed curve $b_2$ in $Z$ which fills with $\delta'$ so that all $k$ intersection points have the same sign.  The intersection matrix is
\[ (i(a_i,b_j)) = \left( \begin{array}{cc} 5 & 1\\ 0 & k \end{array} \right).\]
This has rank $2$.  Now consider the sequence of mapping classes defined by:
\[ f_m = T_{a_1}^mT_{b_1}^{-m}T_{a_2}^mT_{b_2}^{-m}T_{a_2}^m T_{b_1}^{-m}.\]
On the one hand, the sequence $\{f_m\}_{m=1}^\infty$ satisfies the hypothesis of Theorem~\ref{T:Strenner}, and so for $m$ sufficiently large, the $f_m$ are pseudo-Anosov, and have stretch factors
$\lambda(f_m)$ having degree $4$ over $\mathbb Q$.  On the other hand, consecutive curves in the sequence $a_1,b_1,a_2,b_2,a_2,b_1$ intersect nontrivially
(cyclically), and $i(b_1,a_2) = 1$.  Consequently, the sequence $\{f_m\}$ also satisfies Corollary~\ref{C:at worst hyperellipticish}, and so by taking $m$ larger if necessary, it follows that if $f_m$ is a virtual lift via a branched covering $p_m \colon S \to \Sigma_m$, then $p_m$ has degree two, and $\Sigma_m$ is the quotient by an orientation preserving involution $\tau$ preserving $a_1,b_1,a_2,b_2$. The involution $\tau$ must restrict to $\rho$ on $X$ (up to isotopy) by Lemma~\ref{L:special genus 3 involution}.  However, $\rho$ does not preserve the isotopy class of $\delta$ in $X$, and so $\tau$ cannot preserve $b_1$, a contradiction.
Therefore, there is no such involution $\tau$, and hence $f$ is not a virtual lift.  The curves can be oriented so the intersection points have all the same sign, and hence invariant foliations are orientable.

If $r  >2$, we proceed in a similar fashion, choosing a curve $b_2$ that intersects $a_2$ once and is disjoint from all other curves, $a_3$ intersecting $b_2$ once and disjoint from all other curves, and continuing until we have constructed
\[ a_1,b_1,a_2,b_2,\ldots,a_{r-1},b_{r-1},a_r.\]
Then we finally choose $b_r$ so that the union of all the curves fills $S$ and so that $b_r$ is disjoint from all curves except $a_r$, which it intersects in $k$ points, for some $k > 0$, all with the same sign.  The $r \times r$ intersection matrix now has the form
\[ (i(a_i,b_j)) = \left( \begin{array}{cccccccccccccc}  5 & 1 & 0 & 0 &  \cdots & 0 & 0 & 0\\ 
0 & 1 & 1 & 0 &  \cdots & 0 & 0 & 0\\ 
0 & 0 & 1 & 1 &  \cdots & 0 & 0 & 0\\ 
0 & 0 & 0 & 1 &  \cdots & 0 & 0 & 0\\ 
\vdots & \vdots  & \vdots & \vdots & \ddots & \vdots & \vdots & \vdots \\
0 & 0 & 0 & 0  & \cdots & 1&  1 & 0\\ 
0 & 0 & 0 & 0  & \cdots & 0 & 1 & 1\\ 
0 & 0 & 0 &  0 & \cdots & 0 & 0 & k\\ 
\end{array} \right) \]
This has determinant $5k$, and so has rank $r$. 
As above, we can now use Theorem~\ref{T:Strenner} and Corollary~\ref{C:at worst hyperellipticish} to construct a sequence of pseudo-Anosov homeomorphisms
\[ f_m = T_{a_1}^mT_{b_1}^{-m} \cdots T_{a_r}^mT_{b_r}^{-m}T_{a_r}^m \cdots T_{a_2}^m T_{b_1}^{-m},\]
and arguing exactly as in the case $r = 2$ to deduce that for $m$ sufficiently large $\lambda(f_m)$ has degree $2r$ over $\mathbb Q$, and that $f_m$ is not a virtual lift.
\end{proof}



\end{document}